\documentclass[final,english]{ourlematema}

\usepackage{xcolor}
\usepackage{tikz}
\usepackage{hyperref}

\usepackage{bbm}
\usepackage{algorithm}
\usepackage{algpseudocode}

\newcommand{\RR}{\mathbb{R}}

\DeclareMathOperator{\Gr}{Gr}


\title{Hyperplane Arrangements in the Grassmannian} 


\author{Elia Mazzucchelli}
\address{%
Max Planck Institute for Physics\\
Garching bei M\"unchen, Germany\\
\email{eliam@mpp.mpg.de}
}
\author{Dmitrii Pavlov}
\address{%
Max Planck Institute for Mathematics in the Sciences\\
Leipzig, Germany\\
and\\
TU Dresden, Germany (current)\\
\email{dmitrii.pavlov@mis.mpg.de}
}
\author{Kexin Wang}
\address{Harvard Univeristy\\
Cambridge, MA, USA\\
\email{kexin\_wang@g.harvard.edu}
}

\begin{document}
\maketitle
\begin{abstract}
The Euler characteristic of a smooth very affine variety encodes the algebraic complexity of solving likelihood (or scattering) equations on this variety. We study this quantity for the Grassmannian with $d$ hyperplane sections removed. We provide a combinatorial formula, and explain how to compute this Euler characteristic in practice, both symbolically and numerically. Our particular focus is on generic hyperplane sections and on Schubert divisors. We also consider special Schubert arrangements relevant for physics. We study both the complex and the real case.
\end{abstract}
\section{Introduction}
This article studies the topological Euler characteristic of the Grassmannian $\Gr_{\mathbb{K}}(k,n)$ over a field $\mathbb{K}$ with $d$ hyperplane sections removed, where $\mathbb{K} = \mathbb{C}$ or $\mathbb{R}$. When $d\geq k(n-k)$ and the hyperplanes removed form an essential arrangement, this is a smooth very affine variety. A special case of interest is the situation in which the hyperplanes we remove are Schubert. The motivation for our research comes from algebraic statistics and theoretical particle physics. 

In algebraic statistics, a discrete statistical model is the intersection of a projective variety $Y \subseteq \mathbb{CP}^N$ with the simplex $\Delta_N \subset \mathbb{CP}^N$ of real positive points. The problem of \emph{maximum likelihood estimation} consists in maximizing the \emph{log-likelihood function}
\begin{equation}
\small{L(p;s) = s_0 \cdot \log(p_0) + s_1 \cdot \log(p_1) + \ldots + s_N \cdot \log(p_N) - \left(\sum_{i=0}^N s_i\right)\cdot \log\left(\sum_{i=0}^N p_i\right)}
\end{equation}
over the points $p=(p_0:\ldots :p_N)\in Y\cap \Delta_N$ given a data vector $s = (s_0,\ldots,s_N)$. The algebraic complexity of this problem is measured by the number of complex critical points of $L$ on $Y$. This number is constant for generic~$s$, and is called the $\emph{maximum likelihood (ML) degree}$ of $Y$. Note that the critical point equations for $L$ are called \emph{likelihood equations}. 

In particle physics, the likelihood function is called the \emph{scattering potential}, and one notable instance of likelihood equations are the  \emph{Cachazo--He--Yuan (CHY) scattering equations} \cite{Cachazo_2019}. These are the saddle-point equations of the Koba--Nielsen factor in the tree level $n$-point open superstring amplitudes in the ``Gross--Mende'' limit. More generally, the CHY formalism provides a way to compute scattering amplitudes for massless particles \cite{cachazo2014c}. The connection between algebraic statistics and particle physics was observed in \cite{sturmfels2021likelihood}. 

For a smooth very affine variety $X$, the ML degree is the absolute value of the topological Euler characteristic $\chi(X)$ \cite{huh2013maximum}.
When $X$ is the very affine variety obtained by removing the hyperplane arrangement $\mathcal{H} = \{H_1,\ldots,H_d\}$ from the projective space $\mathbb{KP}^N$, the Euler characteristic has a clear combinatorial meaning.
Over $\mathbb{C}$ it counts the number of bounded regions of the corresponding real affine arrangement \cite[Theorem 1.2.1]{varchenko1995critical}, and over $\mathbb{R}$ this is the total number of regions of the arrangement.
In \cite{zaslavsky1975facing}, Zaslavsky provides formulae to compute these numbers. 
In algebraic statistics, projective hyperplane arrangements correspond to \emph{linear models} \cite[Definition 7.3.3]{sullivant2018algebraic}. 

In this article, we take a step further and ask for the Euler characteristic of the very affine variety obtained by removing an arrangement of $d$ hyperplanes from the Grassmannian $\Gr_{\mathbb{K}}(k,n)$ of $k$-planes in an $n$-dimensional vector space over a field $\mathbb{K}$.
From the point of view of algebraic geometry, this is a natural generalization of the case of projective hyperplane arrangements, since the projective space $\mathbb{KP}^{n-1}$ is itself the simplest Grassmannian $\Gr_{\mathbb{K}}(1,n)$. 
It is also inspired by physics. In the context of CHY equations the main variety of interest is the moduli space $\mathcal{M}_{0,n}$ of configurations of $n$ marked points on $\mathbb{CP}^1$. 
This is the quotient of $\Gr_{\mathbb{C}}(2,n)$ by the scaling action of the algebraic torus $(\mathbb{C}^*)^n$. 
This has been generalized to the so called $(k,n)$ \textit{CEGM formalism} \cite{Cachazo_2019}, where one considers scattering equations on the configuration space $X(k,n) = \Gr_{\mathbb{C}}(k,n) / (\mathbb{C}^*)^n$  of $n$ points on $\mathbb{CP}^{k-1}$. Since $X(k,n)$ is naturally connected to the $\Gr(k,n)$, it might also be interesting to study scattering equations on the Grassmannian itself.
We point out that in the $(k,n)$ CEGM formalism one considers a very special Schubert arrangement, namely that of $\binom{n}{k}$ hyperplanes, each defined by the vanishing of a single Pl\"ucker coordinate. An algebraic statistics perspective on $\Gr_{\mathbb{C}}(k,n)$ and $X(k,n)$ is presented in \cite[Section 4]{devriendt2024two}.

Scattering equations are also relevant in the context of positive geometries \cite{lam2022invitation}. Here they arise in the computation of the pushforward of the canonical form under a morphism of positive geometries \cite{Arkani_Hamed_2017}. This again has connections to physics, where scattering equations provide a diffeomorphism from the positive part of $\mathcal{M}_{0,n}$ to the ABHY associahedron. Thus, the canonical form of the
latter, which encodes scattering amplitudes for the bi-adjoint $\phi^3$ theory, is given by the pushforward of the former, which is computed by summing over the saddle points of the string integral \cite{Arkani_Hamed_2017}. More generally, the pushforward from the positive part of $X(k,n)$ yields amplitudes for a certain generalized bi-adjoint $\phi^3$ theory \cite{Cachazo_2019}. This pushforward procedure has also been studied in the setting of stringy canonical forms \cite{Arkani_Hamed_2021}, where the setup of Grassmannian string integrals is somewhat similar to ours. Note that Grassmannian string integrals for $k=4$ may be related to possible non-perturbative geometries for $\mathcal{N} = 4$ super Yang--Mills (SYM) amplitudes. Moreover, four-dimensional scattering equations are related to $\Gr(k,n)$ \cite{Arkani_Hamed_2011}, where $k$ encodes the helicities of the particles. Here scattering equations provide a first step towards finding a stringy canonical form for the $\mathcal{N}=4$ SYM theory.

In this article, we consider the cases $\mathbb{K}= \mathbb{C}$ and $\mathbb{K} = \mathbb{R}$.
When the ground field is that of complex numbers, we are interested in the situation when the arrangement of $d$ hyperplanes we are removing from $\Gr_{\mathbb{C}}(k,n)$ is generic. 
That is, any intersection of $i$ hyperplanes with the Grassmannian has codimension $i$ if $i\leq k(n-k)$ and is empty otherwise, and each hyperplane intersects the Grassmannian transversally. Of special interest is the situation in which the hyperplanes we remove are Schubert (Definition \ref{def:schubert}). 
On the one hand, as mentioned above, this situation is of more relevance to physics. On the other hand, Schubert sections of the Grassmannian are in general not smooth. The simplest example of this phenomenon is that of a single Schubert divisor in $\Gr_{\mathbb{C}}(2,4)$. 
It has exactly one singular point, given by the line that defines the divisor.
The non-smoothness of Schubert divisors presents additional difficulties for computing the Euler characteristic.
In the cases considered, we give formulae for $\chi(\Gr_{\mathbb{C}}(k,n)\setminus \mathcal{H})$ in terms of the Euler characteristics $\chi_{k,n}(d)$ of intersections of $d$ generic (general or Schubert) hyperplanes in $\Gr_{\mathbb{C}}(k,n)$, and show how to compute $\chi_{k,n}(d)$ symbolically (by performing computations in the Chow ring of $\Gr_{\mathbb{C}}(k,n)$) and numerically (by solving the corresponding likelihood equations). 
The results of our computations are supplemented by the \texttt{Julia} code available at \cite{mathrepo}.
We also consider two special Schubert arrangements in $\Gr_{\mathbb{C}}(2,4)$. The first one is associated to lines in $\mathbb{CP}^3$ forming a cycle. This is motivated by the $k=m=2$ tree amplituhedron \cite{ranestad2024adjoints}, whose boundary divisors form such a Schubert arrangement. The second one consists of the six coordinate hyperplanes, and is motivated by the $(2,4)$ CEGM formalism. 

In the real case one cannot speak of generic arrangements.
Instead, we present a numerical algorithm based on \cite{cummings2024smooth} to compute $\chi(\Gr_{\mathbb{R}}(k,n)\setminus \mathcal{H})$ and the number of regions in $\Gr_\RR(k,n)\setminus \mathcal{H}$ for a given arrangement. The algorithm relies on Morse theory. We also report examples with interesting topological behavior, underscoring the fact that hyperplane arrangements inside the real Grassmannian can behave differently from those in the real projective space.

The article is organized as follows. 
In Section \ref{sec:2}, we introduce the necessary notions and formulate our problem. 
In Section \ref{sec:3}, we present our main result, Theorem \ref{thm:main}, giving formulae for the Euler characteristic of $\Gr_{\mathbb{K}}(k,n)\setminus \mathcal{H}$ in the complex and real cases.
Section \ref{sec:4} is devoted to computing the Euler characteristic of $\Gr_{\mathbb{C}}(k,n)\setminus \mathcal{H}$ in practice. Here we consider generic hyperplane arrangements and Schubert arrangements. 
Finally, Section \ref{sec:5} features computational aspects in the real case.
 
\section{Preliminaries} \label{sec:2}

\begin{dfn}
    The \emph{Grassmannian} $\Gr_{\mathbb{K}}(k,n)$ is the variety parametrizing $k$-dimensional linear subspaces of an $n$-dimensional linear space over a field $\mathbb{K}$. Equivalently, it parametrizes $(k-1)$-dimensional subspaces of $\mathbb{KP}^{n-1}$. 
\end{dfn}
The Grassmannian $\Gr_{\mathbb{K}}(k,n)$ is a projective variety of dimension $k(n-k)$; it can be realized as a subvariety of the projective space $\mathbb{KP}^{\binom{n}{k}-1}$ via the \emph{Pl\"ucker embedding}. This embedding sends a $k\times n$-matrix whose rows span a linear subspace to the vector of its maximal minors, called the \emph{Pl\"ucker coordinates}. 
For more details on the Grassmannian, see e.g.~\cite[Chapter~5]{michalek2021invitation}.

\begin{dfn} \label{def:schubert}
    Consider a point $q \in \Gr_{\mathbb{K}}(n-k,n)$. It represents an $(n-k)$-dimensional subspace $V_q$ of $\mathbb{K}^n$. A generic subspace of dimension $k$ intersects $V_q$ trivially, only at the origin. All the $k$-dimensional subspaces that intersect $V_q$ non-generically, i.e.\ in a positive-dimensional subspace, form a subvariety of codimension one in $\Gr_{\mathbb{K}}(k,n)$. This subvariety is called the \emph{Schubert divisor} corresponding to $q$. 
\end{dfn}
Geometrically, a Schubert divisor is a section of $\Gr_{\mathbb{K}}(k,n)$ by a hyperplane in $\mathbb{KP}^{\binom{n}{k}-1}$. Concretely, a subspace $V_p$ corresponding to a point $p\in \Gr_{\mathbb{K}}(k,n)$ meets $V_q$ in positive dimension if and only if the following determinant vanishes: 
\begin{equation}\label{eq:laplace_schubert_hyperplane}
\det \begin{bmatrix}
    P\\
    Q
\end{bmatrix} = 0 \ .
\end{equation}
Here $P$ is a $k\times n$-matrix representing $V_p$, and $Q$ is an $(n-k)\times n$-matrix representing $V_q$. 
This relation defines a hyperplane $H_q$ in $\mathbb{KP}^{\binom{n}{k}-1}$, and any hyperplane arising in such a way is called a \emph{Schubert hyperplane}. The corresponding Schubert divisor is then $\Gr_{\mathbb{K}}(k,n)\cap H_q$. 


\begin{dfn}
    A \emph{projective hyperplane arrangement} over $\mathbb{K}$ is a finite collection of hyperplanes $\mathcal{H} = \{H_1,\ldots,H_d\}$ in $\mathbb{KP}^N$. 
We say that $\mathcal{H}$ is a \emph{Schubert arrangement} in $\mathbb{KP}^{\binom{n}{k}-1}$ if each $H_i$ is a Schubert hyperplane. 
\end{dfn}

Consider the scattering potential
\begin{equation}\label{scattering_potential}
    L(p;s) = \sum_{i=1}^d s_i \log(\det M_i(p)) \ .
\end{equation}
Here $M_i(p)$ are $n\times n$-matrices of the form 
$
\begin{bmatrix}
P
    \\
    Q_i
\end{bmatrix},
$
where $Q_i$ are fixed $(n-k)\times n$-matrices, and $p$ is the vector of Pl\"ucker coordinates of a $k\times n$-matrix $P$, which are treated as variables. The number of complex critical points of this potential is the ML-degree of $\Gr_{\mathbb{C}}(k,n)\setminus \mathcal{H}$, where $\mathcal{H}$ is the arrangement consisting of $d$ Schubert hyperplanes corresponding to $Q_i$ for $i=1,\ldots, d$. Note that when $d \geq k(n-k)$ then $\Gr_{\mathbb{C}}(k,n)\setminus \mathcal{H}$ is a smooth very affine variety (i.e.\ a closed subvariety of the algebraic torus $(\mathbb{C}^*)^N$ for some $N$). Thus, by \cite[Theorem 1]{huh2013maximum} its ML-degree is equal to the absolute value of its topological Euler characteristic $\chi(\Gr_{\mathbb{C}}(k,n)\setminus\mathcal{H})$. 
One of our objects of interest is therefore $\chi(\Gr_{\mathbb{K}}(k,n)\setminus\mathcal{H})$, where $\mathcal{H}$ is a Schubert hyperplane arrangement in $\mathbb{KP}^{\binom{n}{k}-1}$, and $\mathbb{K}$ is either $\mathbb{C}$ or $\mathbb{R}$. We will also study $\chi(\Gr_{\mathbb{K}}(k,n)\setminus\mathcal{H})$ for generic arrangements $\mathcal{H}$ of general hyperplanes, i.e. generic hyperplanes in the ambient projective space.

 In the following we will make use of some elements of Schubert calculus over $\mathbb{C}$. We refer the reader to \cite[Chapter 4]{Eisenbud20163264AA} for an introduction to the topic. Another useful concept is that of partially ordered sets (posets). Here we refer the reader to \cite[Lecture 1]{Stanley2007AnIT}.
\begin{dfn}
     The \emph{Möbius function} of a locally finite poset $(P,\leq)$ is a function $\mu \colon \text{Int}(P) \rightarrow \mathbb{Z}$ such that $\mu(x,x)=1$ and $\sum_{z \in [x,y]} \mu(x,z)=0$ for every $x,y \in P$. Note that we write $\mu(x,y):=\mu([x,y])$ and $\mu(x):= \mu([\hat{0},x])$, where $\hat{0}$ is the minimal element of $P$.
 \end{dfn}
 \begin{exa}
    The \textit{truncated boolean algebra} of rank $r$ with $d$ atoms is the poset consisting of all subsets containing up to $r$ elements of $[d] := \{1, 2, \dots , d \}$ ordered by inclusion. The $d$ single element sets are called \textit{atoms}. The boolean algebra is a graded poset with rank function given by $\text{rk}(x)=i$ for every $x \in \binom{[d]}{i}$ and $0 \leq i \leq r$. Its Möbius function is given by $\mu(x)=(-1)^{\text{rk}(x)}$, which can be easily verified using the identity $\sum_{i=0}^{m} (-1)^{i} \binom{m}{i} = 0$ for every $m>0$.
 \end{exa}
The following result appears as \cite[Theorem 1.1]{Stanley2007AnIT}.
 \begin{prop}[Möbius Inversion Formula]\label{Möb_inversion}
 Let $(P,\leq)$ be a locally finite poset with Möbius function $\mu$ and let $f,g \colon P \rightarrow \mathbb{K}$ be arbitrary functions. The following are equivalent:
 \begin{enumerate}
     \item $f(x) = \sum_{y \geq x} g(y) \quad \forall \, x \in P$ \,,
     \item $g(x) = \sum_{y \geq x} \mu(x,y)f(y) \quad \forall \, x \in P \,.$
 \end{enumerate}
 \end{prop}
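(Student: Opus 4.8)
For the implication $(1) \Rightarrow (2)$ the plan is a direct interchange of summation, using little more than the defining recursion of $\mu$; the reverse implication needs one extra structural observation. Throughout I assume — as is implicit in the statement, and automatic in all the applications below, where $P$ will be finite — that every principal order filter $F_x := \{y \in P : y \ge x\}$ is finite, so that each sum appearing is a finite sum and orders of summation may be exchanged freely.

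For $(1) \Rightarrow (2)$: fix $x \in P$ and substitute $(1)$ into the right-hand side of $(2)$. Grouping the resulting double sum by the outer index gives
\[
\sum_{y \ge x} \mu(x,y) f(y) \;=\; \sum_{y \ge x} \mu(x,y) \sum_{z \ge y} g(z) \;=\; \sum_{z \ge x} g(z) \sum_{y \in [x,z]} \mu(x,y),
\]
and by the definition of the Möbius function the inner sum $\sum_{y \in [x,z]} \mu(x,y)$ equals $1$ if $z = x$ and $0$ otherwise. Hence the whole expression collapses to $g(x)$, which is $(2)$.

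For $(2) \Rightarrow (1)$ I expect the only genuine subtlety. Running the symmetric computation — substituting $(2)$ into $\sum_{y \ge x} g(y)$ and regrouping — produces $\sum_{z \ge x} f(z)\bigl(\sum_{y \in [x,z]} \mu(y,z)\bigr)$, which yields $(1)$ provided the \emph{dual} recursion $\sum_{y \in [x,z]} \mu(y,z) = \delta_{x,z}$ holds; but that identity is not literally the recursion we were handed. One clean way to obtain it is the incidence-algebra argument: in the algebra of functions $\alpha \colon \mathrm{Int}(P) \to \mathbb{K}$ under convolution $(\alpha * \beta)(x,z) = \sum_{y \in [x,z]} \alpha(x,y)\beta(y,z)$, with identity $\delta$, the defining relation reads $\mu * \zeta = \delta$ where $\zeta(x,y) \equiv 1$; the dual recursion defines a function $\mu'$ with $\zeta * \mu' = \delta$; and associativity gives $\mu = \mu * \delta = \mu * (\zeta * \mu') = (\mu * \zeta) * \mu' = \delta * \mu' = \mu'$, so $\mu$ is a two-sided inverse of $\zeta$ and in particular satisfies the dual recursion. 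Alternatively one can sidestep this lemma entirely: given $(2)$, set $\tilde f(x) := \sum_{y \ge x} g(y)$; applying the already-established implication $(1) \Rightarrow (2)$ to $\tilde f$ gives $g(x) = \sum_{y \ge x} \mu(x,y)\tilde f(y)$, which together with $(2)$ forces $\sum_{y \ge x} \mu(x,y)\bigl(\tilde f(y) - f(y)\bigr) = 0$ for every $x$; a downward induction on the finite filter $F_x$, starting at its maximal elements where the sum reduces to $\tilde f(x) - f(x)$, then yields $\tilde f \equiv f$, i.e.\ $(1)$. Thus the computational heart of the proof — the two one-line summation swaps — is routine, and the points that actually require care are the finiteness hypothesis legitimising the rearrangements and, for the reverse direction, upgrading the given one-sided recursion for $\mu$ to its dual form (equivalently, recognising $\mu$ as a two-sided inverse of $\zeta$), for which the bootstrap above is the most economical route.
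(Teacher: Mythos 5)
Your proof is correct. Note that the paper itself does not prove this proposition --- it is quoted verbatim from Stanley's lecture notes --- so there is no internal argument to compare against; your two summation interchanges, together with the incidence-algebra observation that $\mu$ is a two-sided inverse of $\zeta$ (or, alternatively, your bootstrap via downward induction on the filter), constitute exactly the standard proof given in the cited source. You are also right to flag the finiteness of the principal filters $\{y \in P : y \geq x\}$ as the hypothesis legitimising the rearrangements; it is needed for the sums in the statement to make sense at all, and it holds automatically in the paper's application, where the intersection poset is finite.
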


\begin{dfn}
Consider the Grassmannian $\Gr_{\mathbb{K}}(k,n) \subset \mathbb{KP}^{\binom{n}{k}-1}$ and
let $\mathcal{H} = \{H_1,\ldots,H_d\}$ be a collection of $d$ hyperplanes in $\mathbb{KP}^{\binom{n}{k}-1}$. 
There is a natural poset $(\mathcal{L}(\mathcal{H}),\leq)$ associated to the hyperplane arrangement $\mathcal{H}$ in the Grassmannian which we will call the \emph{intersection poset}. Its elements are the non-empty intersections of hyperplanes inside the Grassmannian $\Gr_{\mathbb{K}}(k,n)$ (that is, elements of the form $H_{i_1}\cap\ldots\cap H_{i_l} \cap \Gr_{\mathbb{K}}(k,n)$) ordered by $x \leq y \iff y \subseteq x$ for $x,y \in \mathcal{L}(\mathcal{H})$. The intersection poset is graded of rank at most $d$ and the rank of an element is given by the minimal number of hyperplanes whose intersection with the Grassmannian yields that element.    
\end{dfn}

For $\mathcal{H}$ and $\mathcal{L}(\mathcal{H})$ as above, and for any $x \in \mathcal{L}(\mathcal{H})$, we define the following subarrangements:
\begin{equation}
\mathcal{H}_x := \{H \in \mathcal{H} : x \subseteq H\} \quad \text{and} \quad \mathcal{H}^x := \{x \cap H \neq \emptyset :  H \in \mathcal{H} \setminus \mathcal{H}_x \} \ .
\end{equation}
such that
\begin{equation}
 \mathcal{L}(\mathcal{H}_x) \cong \{y \in \mathcal{L}(\mathcal{H}) : y \leq x \} \quad \text{and} \quad  \mathcal{L}(\mathcal{H}^x) \cong \{y \in \mathcal{L}(\mathcal{H}) : x  \leq y  \}  \ .
\end{equation}

\section{Main Result}\label{sec:3}

We are now ready to state our main result.

\begin{thm} \label{thm:main}
    Let $\mathcal{H}$ be a hyperplane arrangement in $\mathbb{KP}^{\binom{n}{k}-1}$ and $\mathcal{L}(\mathcal{H})$ the intersection poset of the arrangement inside $\mathrm{Gr}_{\mathbb{C}}(k,n)$. The following equality holds for the Euler characteristic:
    \begin{equation}\label{eq:generalcount}
        \chi (\Gr_{\mathbb{K}}(k,n)\setminus \mathcal{H}) = \sum_{y\in\mathcal{L}(\mathcal{H})}\chi(y)\mu(y) \ ,
    \end{equation}
    where $\mu$ is the Möbius function of $\mathcal{L}(\mathcal{H})$.
    If $\mathbb{K} = \mathbb{C}$ and $\mathcal{H}$ is a generic arrangement of $d$ hyperplanes in $\Gr_{\mathbb{K}}(k,n)$, then its Euler characteristic is equal to $P_{k,n}(d)$, where the polynomial $P_{k,n}$ is defined as follows:
    \begin{equation} \label{eq:genericcount}
        P_{k,n}(d) = \sum_{i=0}^{k(n-k)} (-1)^i \chi_{k,n}(i) \binom{d}{i} \in \mathbb{Q}[d] \ .
    \end{equation}
    Here $\chi_{k,n}(i)$ is the Euler characteristic of the intersection of $i$ generic hyperplanes with $\Gr_{\mathbb{C}}(k,n)$. 
\end{thm}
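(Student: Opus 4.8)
The plan is to establish the identity (\ref{eq:generalcount}) by stratifying $\Gr_{\mathbb{K}}(k,n)$ along the arrangement and applying Möbius inversion, and then to derive (\ref{eq:genericcount}) by identifying the intersection poset of a generic complex arrangement with a truncated Boolean algebra.

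\textbf{The Möbius formula.} Write $G := \Gr_{\mathbb{K}}(k,n)$ and include in $\mathcal{L}(\mathcal{H})$ the empty intersection $\hat{0} := G$ (this is forced already by the notation $\mu(y) = \mu([\hat 0, y])$, since the poset must have a least element). For $x \in \mathcal{L}(\mathcal{H})$ put
\[
x^{\circ} \;:=\; x \setminus \bigcup_{y \in \mathcal{L}(\mathcal{H}),\; y > x} y,
\]
the locus of points of $x$ lying on no strictly deeper intersection. Since $\mathcal{L}(\mathcal{H})$ is closed under taking non-empty intersections, each point of $G$ belongs to a unique smallest member of $\mathcal{L}(\mathcal{H})$; hence $x = \bigsqcup_{y \geq x} y^{\circ}$ for every $x$, and in particular $\hat{0}^{\circ} = G \setminus (H_1 \cup \cdots \cup H_d) = G \setminus \mathcal{H}$. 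Now $\chi$ — understood, in the real case, as the compactly supported Euler characteristic, which for complex varieties agrees with the ordinary one — is additive over finite decompositions into locally closed pieces, so $\chi(x) = \sum_{y \geq x} \chi(y^{\circ})$ for all $x \in \mathcal{L}(\mathcal{H})$. Applying the Möbius Inversion Formula stated above with $f = \chi$ and $g(\cdot) = \chi((\cdot)^{\circ})$ gives $\chi(x^{\circ}) = \sum_{y \geq x} \mu(x,y)\,\chi(y)$, and the special case $x = \hat 0$ reads
\[
\chi(G \setminus \mathcal{H}) \;=\; \chi(\hat{0}^{\circ}) \;=\; \sum_{y \in \mathcal{L}(\mathcal{H})} \mu(\hat 0, y)\,\chi(y) \;=\; \sum_{y \in \mathcal{L}(\mathcal{H})} \mu(y)\,\chi(y),
\]
which is (\ref{eq:generalcount}).

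\textbf{The generic complex case.} Let $\mathbb{K} = \mathbb{C}$, let $\mathcal{H} = \{H_1, \dots, H_d\}$ be generic, and set $r := k(n-k) = \dim G$. For $S \subseteq [d]$ write $H_S := \bigcap_{i \in S} H_i \cap G$. By genericity, $H_S$ has codimension $|S|$ in $G$ if $|S| \leq r$ and is empty if $|S| > r$; moreover distinct subsets of size at most $r$ yield distinct elements, since $H_S = H_T$ would force $H_{S \cup T} = H_S$ to have two different codimensions (or to be simultaneously empty and non-empty). Thus $S \mapsto H_S$ identifies the truncated Boolean algebra of rank $r$ on $d$ atoms with $\mathcal{L}(\mathcal{H})$, taking rank to cardinality, so by the example of the truncated Boolean algebra above $\mu(H_S) = (-1)^{|S|}$. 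Finally, the space of generic arrangements is irreducible and, over a dense open subset of it, the family of codimension-$i$ sections $\{H_S : |S| = i\}$ is topologically locally trivial (by generic smoothness together with a stratified isotopy argument); hence $\chi(H_S)$ depends only on $|S| = i$ and equals the number $\chi_{k,n}(i)$ of the statement. Substituting into (\ref{eq:generalcount}) and collecting the $\binom{d}{i}$ subsets of each cardinality $i$,
\[
\chi(G \setminus \mathcal{H}) \;=\; \sum_{i=0}^{r} \sum_{S \subseteq [d],\; |S| = i} (-1)^{i}\,\chi_{k,n}(i) \;=\; \sum_{i=0}^{k(n-k)} (-1)^{i} \binom{d}{i} \chi_{k,n}(i) \;=\; P_{k,n}(d),
\]
proving Theorem \ref{thm:main}.

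\textbf{Main obstacle.} The stratify-and-invert step is essentially formal once one has additivity of the Euler characteristic; the real subtleties are (i) being careful in the real case that only the additive version $\chi_c$ makes (\ref{eq:generalcount}) literally true, and (ii) in the generic case, justifying that $\chi(H_S)$ is independent of the chosen generic arrangement — equivalently, that the family of expected-codimension linear sections of $G$ is equisingular over a dense open part of parameter space, so that $\chi_{k,n}(i)$ is well defined. Point (ii) is where generic smoothness and a Thom--Mather type isotopy lemma enter, and it is precisely what makes the vanishing of $(k(n-k)+1)$-fold intersections — and hence the truncation at $i = k(n-k)$ in (\ref{eq:genericcount}) — an essential hypothesis rather than a convenience.
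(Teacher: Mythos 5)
Your proposal is correct and follows essentially the same route as the paper: stratify the Grassmannian by the open strata $y\setminus\mathcal{H}^{y}$ (your $y^{\circ}$), use additivity of the Euler characteristic, invert via the M\"obius function with $x=\Gr_{\mathbb{K}}(k,n)$, and in the generic complex case identify $\mathcal{L}(\mathcal{H})$ with a truncated Boolean algebra so that $\mu(y)=(-1)^{\mathrm{rank}(y)}$ and the rank-$i$ elements contribute $\binom{d}{i}\chi_{k,n}(i)$. The extra justifications you supply (using the compactly supported Euler characteristic in the real case, distinctness of the intersections $H_S$, and the equisingularity argument making $\chi_{k,n}(i)$ well defined) are points the paper asserts implicitly under genericity rather than spells out.
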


\begin{proof}
    Our proof strategy is inspired by the proof of Zaslavsky's region counts \cite{zaslavsky1975facing} from \cite[Theorem 2.5]{Stanley2007AnIT}. By the inclusion-exclusion principle we have
    \begin{equation}
        x = \bigsqcup_{y \geq x} y \setminus \mathcal{H}^{y} \quad \forall \, x \in \mathcal{L}(\mathcal{H}) \ ,
    \end{equation}
    and by additivity of the Euler characteristic 
    \begin{equation}
        \chi(x) = \sum_{y \geq x}  \chi(y \setminus \mathcal{H}^{y}) \quad  \forall \, x \in \mathcal{L}(\mathcal{H}) \ .
    \end{equation} 
    The Möbius inversion formula (Proposition \ref{Möb_inversion}) yields
    \begin{equation}
        \chi(x \setminus \mathcal{H}^{x}) = \sum_{y \in \mathcal{L}(\mathcal{H}) \, : \, y \geq x} \chi(y)  \mu(x,y) \ .
    \end{equation}
    Setting $x=\Gr_{\mathbb{K}}(k,n)$ we obtain
    \begin{equation}\label{regions}
         \chi(\Gr_{\mathbb{K}}(k,n)\setminus \mathcal{H}) = \sum_{y \in \mathcal{L}(\mathcal{H}) } \chi(y)  \mu(y) \ .
    \end{equation}
    
    If $\mathbb{K}= \mathbb{C}$ and the arrangement $\mathcal{H}$ is generic, the intersection poset $\mathcal{L}(\mathcal{H})$ is a truncated boolean algebra, therefore its Möbius function is given by $\mu(y) = (-1)^{\mathrm{rank} (y)}$.  
    At fixed rank $i$, there are $\binom{d}{i}$ elements in $\mathcal{L}(\mathcal{H})$, corresponding to the possible ways of choosing $i$ divisors in $\mathcal{H}$ to be intersected. Over $\mathbb{C}$, by the genericity assumption, such intersections all have the same dimension $k(n-k)-i$ and the same Euler characteristic $\chi_{k,n}(i)$. By substituting this into \eqref{eq:generalcount}, we get the formula \eqref{eq:genericcount}. 
\end{proof}

\begin{rem}
    For $k=1$, we recover the setting of hyperplane arrangements in $\mathbb{KP}^{n-1}$. 
    For a generic real arrangement $\mathcal{H}$, all the elements of $\mathcal{L}(\mathcal{H})$ in codimension $i$ are isomorphic to $\mathbb{RP}^{n-1-i}$ and have the Euler characteristic equal to $(-1)^{n-1-i}$. Equation \eqref{eq:generalcount} then recovers Zaslavsky's region count 
    $$1 + (d-1) + \binom{d-1}{2} + \binom{d-1}{3} + \dots + \binom{d-1}{n} \ .$$
\end{rem}

We say that $d$ linear subspaces $V_j$ of dimension $k$ in $\mathbb{C}^{n}$ are \emph{in general position}, if any intersection of them has the minimal possible dimension, i.e.\ for every $j_1, \dots,j_i$ pairwise distinct,
\begin{equation}
    \dim(V_{j_1} \cap \dots \cap V_{j_i}) = \max(ik-(i-1)n\, ,0) \quad \forall 1 \leq i \leq d \, .
\end{equation}
In what follows, when we speak of $d$ generic Schubert divisors in $\Gr_{\mathbb{C}}(k,n)$, we mean that the $d$ many $(n-k)$-dimensional linear subspaces of $\mathbb{C}^{n}$ defining these divisors are in general position.

By replacing a generic hyperplane arrangement with a generic Schubert arrangement and arguing in a similar way, we obtain the following corollary.
\begin{cor}[Schubert arrangements]
    Let $\mathcal{H}$ be a generic Schubert arrangement in $\Gr_{\mathbb{C}}(k,n)$ with $d$ divisors. Then 
    \begin{equation}\label{Schubert:count}
        P_{k,n}^{S}(d) = \chi(\Gr_{\mathbb{C}}(k,n)\setminus \mathcal{H}) = \sum_{i=0}^{k(n-k)} (-1)^i \chi^S_{k,n}(i) \binom{d}{i} \in \mathbb{Q}[d] \ ,
    \end{equation}
    where $\chi^S_{k,n}(i)$ is the Euler characteristic of the intersection of $i$ generic Schubert divisors in $\Gr_{\mathbb{C}}(k,n)$.
\end{cor}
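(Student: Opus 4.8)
The plan is to rerun the proof of Theorem~\ref{thm:main} with \emph{generic hyperplane} replaced throughout by \emph{generic Schubert divisor}, isolating the two places where genericity is used and pointing out which one is the real obstacle. Since a Schubert arrangement is in particular an arrangement of $d$ hyperplanes in $\mathbb{CP}^{\binom{n}{k}-1}$, the general identity \eqref{eq:generalcount} applies verbatim and yields
\[
\chi\bigl(\Gr_{\mathbb{C}}(k,n)\setminus\mathcal{H}\bigr)=\sum_{y\in\mathcal{L}(\mathcal{H})}\chi(y)\,\mu(y)\ .
\]
This uses only inclusion--exclusion, additivity of the (compactly supported) Euler characteristic, and Möbius inversion (Proposition~\ref{Möb_inversion}); in particular the fact that individual Schubert divisors and their intersections are typically singular is irrelevant here.

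The first use of genericity is to identify the intersection poset. A Schubert divisor has class $\sigma_1$ in the Chow ring of $\Gr_{\mathbb{C}}(k,n)$, and $\mathrm{GL}_n$ acts transitively on the set $\Gr_{\mathbb{C}}(n-k,n)$ of subspaces defining such divisors, so a generic Schubert divisor is a general translate of a fixed one. Since $\sigma_1^{\,i}\ne 0$ in $A^{i}(\Gr_{\mathbb{C}}(k,n))$ for $i\le k(n-k)$, a nonzero-cohomology-class argument shows that the intersection of any $i$ Schubert divisors is nonempty when $i\le k(n-k)$; iterating Kleiman's transversality theorem for general translates (see \cite[Chapter~4]{Eisenbud20163264AA}) then shows that this intersection has pure codimension $i$, and that the intersection of any $k(n-k)+1$ of them is empty. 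A dimension count shows moreover that distinct $i$-subsets of $[d]$ give distinct subvarieties: a nonempty intersection of some of the $H_j$ cannot be contained in a further generic Schubert hyperplane $H_m$, for otherwise it would lie in the base locus of the linear system of all Schubert hyperplanes, which is empty since the dual Grassmannian is linearly nondegenerate. Therefore $S\mapsto\bigcap_{j\in S}H_j\cap\Gr_{\mathbb{C}}(k,n)$ identifies $\{S\subseteq[d]:|S|\le k(n-k)\}$ with $\mathcal{L}(\mathcal{H})$, so $\mathcal{L}(\mathcal{H})$ is the truncated boolean algebra of rank $k(n-k)$ on $d$ atoms and $\mu(y)=(-1)^{\mathrm{rank}(y)}$.

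The second, and harder, use of genericity is that all $\binom{d}{i}$ rank-$i$ elements of $\mathcal{L}(\mathcal{H})$ have one and the same Euler characteristic, which we then call $\chi^S_{k,n}(i)$. Here the argument of Theorem~\ref{thm:main} does not transfer directly: generic hyperplane sections of a fixed smooth variety all have the same topology because their universal family is a smooth proper morphism over a connected base, whereas $i$-fold intersections of generic Schubert divisors are genuinely singular (a single Schubert divisor already is), and a group-theoretic shortcut is unavailable since $\mathrm{GL}_n$ does not act transitively on general-position $i$-tuples of $(n-k)$-planes --- already four general lines in $\mathbb{CP}^3$ carry a cross-ratio modulus. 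Instead I would consider the universal family $\mathcal{X}_i\to B_i$, where $B_i\subseteq\Gr_{\mathbb{C}}(n-k,n)^{\,i}$ is the open, irreducible, hence connected locus of $i$-tuples in general position, with fiber $\bigcap_j H_{q_j}\cap\Gr_{\mathbb{C}}(k,n)$ over $(q_1,\dots,q_i)$; after a Whitney stratification of $B_i$, Thom's first isotopy lemma (equivalently, Verdier's generic local triviality) makes this family topologically locally trivial over a dense open $U\subseteq B_i$, so the Euler characteristic of the fiber is constant on $U$. Declaring a Schubert arrangement \emph{generic} precisely when each of its sub-$i$-tuples lands in the corresponding $U$ then makes $\chi^S_{k,n}(i)$ well defined and equal to $\chi(y)$ for every rank-$i$ element $y$.

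Substituting $\mu(y)=(-1)^{\mathrm{rank}(y)}$, the count $\binom{d}{i}$ of rank-$i$ elements, and $\chi(y)=\chi^S_{k,n}(i)$ into the displayed sum gives
\[
\chi\bigl(\Gr_{\mathbb{C}}(k,n)\setminus\mathcal{H}\bigr)=\sum_{i=0}^{k(n-k)}(-1)^i\,\chi^S_{k,n}(i)\binom{d}{i}\ ,
\]
and the right-hand side, being a polynomial in $d$, may be denoted $P^S_{k,n}(d)$. I expect the Kleiman-transversality bookkeeping of the second paragraph to be routine but slightly tedious, and the family argument of the third paragraph --- establishing that the Euler characteristic of a generic $i$-fold Schubert intersection is well defined despite the singularities --- to be the main obstacle.
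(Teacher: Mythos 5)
Your proposal is correct, and its skeleton is exactly the paper's: the paper proves this corollary in one line, by saying that one reruns the proof of Theorem~\ref{thm:main} with generic hyperplanes replaced by generic Schubert divisors, which is precisely your plan. What you add is the detail the paper leaves implicit, and the two additions are worth noting. First, your identification of $\mathcal{L}(\mathcal{H})$ with a truncated boolean algebra via Kleiman transversality, the nonvanishing of $\sigma_1^i$, and the empty-base-locus argument is a legitimate filling-in of ``arguing in a similar way.'' Second, and more substantively, you correctly observe that the Bertini-type argument for constancy of $\chi$ across all rank-$i$ elements does not transfer verbatim, since the intersections are singular and $\mathrm{GL}_n$ is not transitive on general-position $i$-tuples of $(n-k)$-planes; your fix via the universal family over the general-position locus and Verdier/Thom generic local triviality is sound, but note that it quietly strengthens the paper's notion of genericity (the paper defines generic Schubert divisors purely by the linear-algebraic general-position condition on the subspaces, whereas you require each sub-$i$-tuple to lie in the dense open set $U$ of topological local triviality). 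This is a reasonable and arguably more honest reading --- the paper's general-position condition alone is not obviously sufficient for constancy of the Euler characteristic, and the paper in effect uses ``generic'' in your stronger sense --- so your proof establishes the statement as intended, with the well-definedness of $\chi^S_{k,n}(i)$ made explicit rather than assumed.
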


\section{The complex case} \label{sec:4}

In this  section, we concentrate on studying the quantities $\chi_{k,n}(i)$ and $\chi^S_{k,n}(i)$.

\subsection{Generic hyperplane arrangements in the Plücker space} \label{sec:4:1}

We start by considering the case of an arrangement $\mathcal{H}$ of $d$ generic hyperplanes in $\mathbb{CP}^{\binom{n}{k}-1}$. By Bertini's theorem, all the elements in the intersection poset $\mathcal{L}(\mathcal{H})$ are smooth. If $\text{dim}(\Gr_\mathbb{C}(k,n)) \geq 2$, which we may assume w.l.o.g., the intersections are connected and irreducible. 
In this setting, the Euler characteristic of the intersection can be computed using the Chern-Schwartz-MacPherson (CSM) class of $\Gr_\mathbb{C}(k,n)$, see \cite{Aluffi_2013}.
More generally, for a complex projective variety $X \subset \mathbb{CP}^{N}$ and generic hyperplane sections $H_i$, the Euler characteristic $\chi(X \cap H_1 \cap \dots \cap H_r)$ is completely determined by the CSM class of $X$ (denoted by $c_{{\rm SM}}(\mathbbm{1}_X)$), which is an element in the Chow group of $X$ or of the ambient projective space. If $X$ is smooth, as is the case of $X=\Gr_{\mathbb{C}}(k,n)$, the CSM class of $X$ is determined by the Chern class of the tangent bundle of $X$, denoted by $c(TX)$.
More precisely, if $X$ is nonsingular and complete, then $c_{{\rm SM}}(\mathbbm{1}_X) = c(TX) \cap [X]$, where $[X]$ is the fundamental class of $X$ in the appropriate Chow group.

We briefly summarize the results of \cite{Aluffi_2013} which are relevant for our discussion. In the above setting, we consider the topological invariant
\begin{equation}\label{chi_polynomial}
        \chi_X(t) := \sum_{r \geq 0} (-1)^r  \chi(X \cap H_{1} \cap \dots \cap H_{r}) \, t^{r} \ ,
\end{equation}
and the polynomial 
\begin{equation}
    \gamma_X(t) := \sum_{r \geq 0} \left(\int h_1^r \cdot c_{{\rm SM}}(\mathbbm{1}_X) \right) \, t^r = \sum_{r \geq 0} c_r \, t^{r} \ ,
\end{equation}
where $h_1$ is the hyperplane class in the Chow group of $\mathbb{CP}^{N}$. That is, $\gamma_X(t)$ is obtained from $ c_{{\rm SM}}(\mathbbm{1}_X) = \sum_{r \geq 0} c_r \, [\mathbb{CP}^r] $ by replacing $[\mathbb{CP}^r]$ with $t^r$. 
Consider the following transformation of polynomials
\begin{equation}\label{I_map}
    p(t) \mapsto \mathcal{I}(p)(t) := \frac{t \, p(-t-1)+p(0)}{t+1} \ ,
\end{equation}
so that the effect of $\mathcal{I}$ is to perform a sign-reversing symmetry around $t = -1/2$ of the non-constant part of $p$. \cite[Theorem 1.1]{Aluffi_2013} states that
\begin{equation}\label{Aluffi}
    \gamma_X = \mathcal{I}(\chi_X) \quad \text{and} \quad \chi_X = \mathcal{I}(\gamma_{X}) \ .
\end{equation}

This result allows to compute $\chi_{k,n}(i)$ from the Chern class of $\Gr_{\mathbb{C}}(k,n)$. Note that the computation of the latter is relatively involved, but one can compute it algorithmically, e.g. with the \texttt{Macaulay2} package \texttt{CharacteristicClasses} \cite{M2CC}, applying the function \texttt{Chern} to the ideal of the Grassmannian. It is important to notice that the output of this command gives a polynomial in the Chow ring of the ambient projective space, and hence, in order to apply \eqref{Aluffi}, one has to perform a transformation $h_1^{j} \mapsto h_1^{N-j} =: t^{N-j}$, where $N$ is the dimension of the ambient projective space. 

\begin{exa}
    Let $k=2$ and $n=4$. With \texttt{Macaulay2} we compute 
    \begin{equation}
    \gamma_{\Gr_{\mathbb{C}}(2,4)}(t) = 6 + 12 t + 14 t^{2} + 8 t^{3} + 2 t^4 \in \mathbb{Z}[t]/(t^6)  \ .
\end{equation}
    By \eqref{Aluffi} we have
    \begin{equation}\label{eq:polyeuler}
       \chi_{\Gr_{\mathbb{C}}(2,4)}(t)= \mathcal{I}(\gamma_{\Gr_{\mathbb{C}}(2,4)})(t) = 6 - 4 t + 4 t^2 - 2 t^3 + 2 t^4 \ ,
    \end{equation}
    from which we read off the Euler characteristics 
    \begin{equation} 
        \chi_{2,4}(0) = 6 \ , \quad \chi_{2,4}(1) = 4 \ , \quad \chi_{2,4}(2) = 4 \ , \quad \chi_{2,4}(3) = 2 \ , \quad \chi_{2,4}(4) = 2 \ . 
    \end{equation}
    Theorem \ref{thm:main} yields
    \begin{equation}
        P_{2,4}(d) = 6 -4d + 4\binom{d}{2} - 2\binom{d}{3} + 2\binom{d}{4}= \frac{1}{12} (72 - 86 d + 47 d^2 - 10 d^3 + d^4) \ .
    \end{equation}
    This is exactly the polynomial \eqref{eq:polyeuler} with $t^i$ replaced by $\binom{d}{i}$. 
    \begin{table}[h!] 
    \centering
    \begin{tabular}{|c|c|c|c|c|c|c|c|c|} 
    \hline
    $d$    & 4 & 5 & 6 & 7 & 8 & 9 & 10 & 11 \\
    \hline 
    $P_{2,4}(d)$  & 8 & 16 & 32 & 62 & 114 & 198 & 326 & 512   \\
    \hline  
    \end{tabular}
    \caption{The ML-degree of the complement of $d$ generic hyperplane sections in $\Gr_{\mathbb{C}}(2,4)$.}
    \label{table:ML_24}
    \end{table}
\end{exa}

\begin{exa}
    Consider $k=2$ and $n=5$. With \texttt{Macaulay2} we compute
    \begin{equation}
     \gamma_{\Gr_{\mathbb{C}}(2,5)}(t) = 10 + 30 t + 60 t^2 + 75 t^3 + 57 t^4 + 25 t^5 + 5 t^6 \in \mathbb{Z}[t]/(t^{10})  \ .
    \end{equation}
    By (\ref{Aluffi}) we have
    \begin{equation}
        \chi_{\Gr(2,5)}(t) = 10 - 8 t + 6 t^2 - 4 t^3 + 7 t^4 + 5 t^6 \ ,
    \end{equation}
    so we read off
    \begin{equation}
    \begin{aligned}
        &\chi_{2,5}(0) = 10 \ , \quad \chi_{2,5}(1) = 8 \ , \quad \chi_{2,5}(2) = 6 \ , \quad \chi_{2,5}(3) = 4 \ , \\ &\chi_{2,5}(4) = 7\ , \ \quad \chi_{2,5}(5) = 0 \ , \quad \chi_{2,5}(6) = 5 \ . 
    \end{aligned}
    \end{equation}
    The polynomial (\ref{eq:genericcount}) reads
    \begin{equation}
    \begin{aligned}
        P_{2,5}(d) &= \frac{1}{144} (1440 - 2148 d + 1456 d^2 - 573 d^3 + 127 d^4 - 15 d^5 + d^6) \\
        &= \frac{1}{144} (d-2) (d-3) (240 - 158 d + 71 d^2 - 10 d^3 + d^4) \ .
    \end{aligned}
    \end{equation}
    \begin{table}[h!] 
    \centering
    \begin{tabular}{|c|c|c|c|c|c|c|c|c|} 
    \hline
    $d$  & 4 & 5 & 6 & 7 & 8 & 9 & 10 & 11 \\
    \hline 
    $P_{2,5}(d)$ & 5 & 25 & 82 & 220 & 520 & 1120 & 2240 & 4212   \\
    \hline  
    \end{tabular}
    \caption{The ML-degree of the complement of $d$ generic hyperplane sections in $\Gr_{\mathbb{C}}(2,5)$.}
    \label{table:ML_25}
    \end{table}
\end{exa}

\subsection{Generic Schubert arrangements}

When we move from generic hyperplane arrangements to arrangements of Schubert divisors in $\Gr_{\mathbb{C}}(k,n)$, the main difference is that intersections of Schubert divisors need not be smooth. See Example \ref{ex:adhoc} for an illustration. This presents a new computational challenge, since for non-smooth varieties the Chern class no longer encodes the Euler characteristic. As noted in Section \ref{sec:4:1}, one has to turn to a finer concept, that of CSM classes. CSM classes can still be computed algorithmically \cite{helmer2016algorithms}. However, this computation is in general more involved than that of Chern classes. For instance, we failed to compute CSM classes of Schubert sections of $\Gr_{\mathbb{C}}(2,5)$ in \texttt{Macaulay2}, while the computation of the corresponding Chern classes was almost instantaneous. 

In view of these computational difficulties, it is desirable to avoid computing CSM classes whenever possible in the process of computing Euler characteristics. In this section, we offer two ways of doing so. On the one hand, we show that for a sufficiently large number of Schubert divisors, the intersection \emph{is in fact smooth}, and one can thus infer information about its Euler characteristic from the Chern class, as shown in the previous section. On the other hand, we offer a \emph{recursive approach} to computing Euler characteristics of intersections of a sufficiently small number of Schubert divisors.

Before that, we present the examples of $\Gr_{\mathbb{C}}(2,4)$ and $\Gr_{\mathbb{C}}(2,5)$ where we computed the ML degree of the complement of generic Schubert arrangements by counting the number of solutions to the scattering equations, i.e.\ by finding the non-singular critical points of \eqref{scattering_potential}. For that we use the \texttt{Julia} package \texttt{HomotopyContinuation.jl} \cite{breiding2018homotopycontinuation}. Our code is available at~\cite{mathrepo}. The numerical results of Examples \ref{ex:24num} and \ref{ex:gr25} are verified theoretically in Examples \ref{ex:252}, \ref{ex:chi24} and \ref{ex:adhoc}. 
\begin{exa} \label{ex:24num}
    Let $k=2$ and $n=4$. We compute numerically
    \begin{equation}
        P_{2,4}^S(d)= \frac{1}{12}(72 - 98 d + 47 d^2 - 10 d^3 + d^4) \ ,
    \end{equation}
    which corresponds to (\ref{Schubert:count}) with 
    \begin{equation}\label{chis_24}
        \chi_{2,4}^S(0) = 6 \ , \quad \chi_{2,4}^S(1) = 5 \ , \quad \chi_{2,4}^S(2) = 4 \ , \quad \chi_{2,4}^S(3) = 2 \ , \quad \chi_{2,4}^S(4) = 2  \ .
    \end{equation}
    \begin{table}[h!] 
    \centering
    \begin{tabular}{|c|c|c|c|c|c|c|c|c|} 
    \hline
    $d$  & 4 & 5 & 6 & 7 & 8 & 9 & 10 & 11 \\
    \hline 
    $P_{2,4}^S(d)$ & 4 & 11 & 26 & 55 & 106 & 189 & 316 & 501   \\
    \hline  
    \end{tabular}
    \caption{The ML-degree of the complement of $d$ generic Schubert divisors in $\Gr_{\mathbb{C}}(2,4)$.}
    \label{table:ML_24schubert}
    \end{table}
    \begin{table}[h!] 
    \centering
    \begin{tabular}{|c|c|c|c|c|c|c|c|c|} 
    \hline
    $d$  & 4 & 5 & 6 & 7 & 8 & 9 & 10 & 11 \\
    \hline 
    $P_{2,5}^S(d)$ & 1 & 10 & 46 & 150 & 400 & 931 & 1960 & 3816   \\
    \hline 
    \end{tabular}
    \caption{The ML-degree of the complement of $d$ generic Schubert divisors in $\Gr_{\mathbb{C}}(2,5)$.}
    \label{table:ML_25schubert}
    \end{table}
\end{exa}

\begin{exa} \label{ex:gr25}
    Let $k=2$ and $n=5$. We compute numerically
    \begin{equation}
    \begin{aligned}
        P_{2,5}^S(d) &= \frac{1}{144}(1440 - 2580 d + 1816 d^2 - 645 d^3 + 127 d^4 - 15 d^5 + d^6) \\
        & = \frac{1}{144}(6 - 5 d + d^2)^2 (40 - 5 d + d^2) \ ,
    \end{aligned}
    \end{equation}
    which corresponds to (\ref{Schubert:count}) with
    \begin{equation}\label{chis_2_5}
    \begin{aligned}
           &\chi_{2,5}^S(0) = 10 \ , \quad \chi_{2,5}^S(1) = 9 \ , \quad \chi_{2,5}^S(2) = 8 \ , \quad \chi_{2,5}^S(3) = 7 \ , \\ 
           &\chi_{2,5}^S(4) = 7 \ , \quad \chi_{2,5}^S(5)= 0 \ , \quad \chi_{2,5}^S(6) = 5  \ .
    \end{aligned}
    \end{equation}
\end{exa}

We now discuss when the intersection of $d$ generic Schubert divisors is smooth and its Euler characteristic agrees with that of a generic section of the Grassmannian by $d$ hyperplanes.

\begin{prop}
      Consider the intersection $\mathcal{D}$ of $d$ generic Schubert divisors in $\Gr_{\mathbb{C}}(k,n)$. If $d \geq \binom{n}{k} - k(n-k)$, then there exist $d$ hyperplane sections in $\mathbb{CP}^N$ in general position whose intersection in $\Gr_{\mathbb{C}}(k,n)$ is equal to $\mathcal{D}$. In particular, the intersection is smooth and 
     \begin{equation}
         \chi_{k,n}^{S}(d) = \chi_{k,n}(d) \qquad \forall \, d \geq \binom{n}{k} - k(n-k) \ .
     \end{equation}
\end{prop}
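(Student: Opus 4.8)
The plan is to show that a generic arrangement of $d$ Schubert divisors cuts out of $\Gr_{\mathbb{C}}(k,n)$ the very same subvariety as a generic arrangement of $d$ hyperplanes does; the smoothness of $\mathcal{D}$ and the equality of Euler characteristics then follow at once from the Bertini-theoretic discussion of Section~\ref{sec:4:1}. Write $W=\Lambda^{k}\mathbb{C}^{n}$ and $N=\binom{n}{k}-1$, so the Plücker embedding places $\Gr_{\mathbb{C}}(k,n)$ in $\mathbb{CP}^{N}=\mathbb{P}(W)$. By the Laplace expansion \eqref{eq:laplace_schubert_hyperplane}, the Schubert hyperplane $H_{q}$ is the zero locus of the linear form $p\mapsto\langle p,\omega_{q}\rangle$, where $\omega_{q}\in\Lambda^{n-k}\mathbb{C}^{n}\cong W^{*}$ is a Plücker vector of $q\in\Gr_{\mathbb{C}}(n-k,n)$; hence Schubert hyperplanes in $\mathbb{P}(W)$ correspond exactly to the points of the Plücker-embedded variety $Y:=\Gr_{\mathbb{C}}(n-k,n)\subset\mathbb{P}(W^{*})$, which is non-degenerate, and for $q_{1},\dots,q_{d}\in Y$ one has $H_{q_{1}}\cap\dots\cap H_{q_{d}}=\mathbb{P}(\Lambda^{\perp})$ with $\Lambda:=\langle q_{1},\dots,q_{d}\rangle\subseteq W^{*}$.

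The crux is the claim: if $d\ge\binom{n}{k}-k(n-k)$, then $d$ general points of $Y$ span a general $d$-dimensional subspace of $W^{*}$; equivalently, the span map $\phi\colon Y^{d}\to\mathrm{Gr}(d,W^{*})$, $(q_{1},\dots,q_{d})\mapsto\langle q_{1},\dots,q_{d}\rangle$, defined on the dense open locus where the $q_{i}$ are linearly independent, is dominant. (We may assume $d\le N$; if $d\ge N+1$ then $\Lambda=W^{*}$, so $\mathcal{D}=\emptyset$, and $\chi_{k,n}(d)=0$ as well.) To prove dominance I would show that the differential $d\phi$ is surjective at a general point $(q_{1},\dots,q_{d})$. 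Let $\widehat T_{i}\subseteq W^{*}$ be the affine tangent space to $Y$ at $q_{i}$, a linear subspace of dimension $k(n-k)+1$ containing $\langle q_{i}\rangle$. For general $q_{i}$ the sum $\Lambda=\bigoplus_{i}\langle q_{i}\rangle$ is direct, and a direct computation identifies the image of $d\phi$ inside $\Hom(\Lambda,W^{*}/\Lambda)=\bigoplus_{i}\Hom(\langle q_{i}\rangle,W^{*}/\Lambda)$ with the set of those $\psi$ satisfying $\psi(q_{i})\in\mathrm{im}(\widehat T_{i}\to W^{*}/\Lambda)$ for every $i$. Hence $d\phi$ is surjective if and only if $\widehat T_{i}+\Lambda=W^{*}$ for each $i$, that is (since $\langle q_{i}\rangle\subseteq\widehat T_{i}$) if and only if the images of $\{q_{j}:j\ne i\}$ span $W^{*}/\widehat T_{i}$. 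Now $\dim(W^{*}/\widehat T_{i})=N-k(n-k)$, and the hypothesis gives $d-1\ge N-k(n-k)$; since $Y$ is non-degenerate and $\widehat T_{i}\subsetneq W^{*}$, its image in $\mathbb{P}(W^{*}/\widehat T_{i})$ is again a non-degenerate irreducible variety, so $N-k(n-k)$ general points of it form a basis of $W^{*}/\widehat T_{i}$ (a non-degenerate irreducible variety lies on no proper linear subspace). This proves the claim.

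Granting the claim, for a generic Schubert arrangement the intersection $\mathcal{D}=\Gr_{\mathbb{C}}(k,n)\cap\mathbb{P}(\Lambda^{\perp})$ is the section of $\Gr_{\mathbb{C}}(k,n)$ by a general linear subspace of $\mathbb{P}(W)$ of dimension $N-d$, because $\Lambda^{\perp}$ is a general $(N+1-d)$-dimensional subspace of $W$ and a general $(N-d)$-plane is the common zero locus of $d$ general hyperplanes. As recalled in Section~\ref{sec:4:1}, such a section is smooth by Bertini, and its Euler characteristic equals $\chi_{k,n}(d)$ by the very definition of the latter; since $\chi^{S}_{k,n}(d)=\chi(\mathcal{D})$ by definition, we conclude $\chi^{S}_{k,n}(d)=\chi_{k,n}(d)$. (When $d>k(n-k)$ both sides vanish, the section being empty.)

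The main obstacle is the claim, i.e.\ the dominance of $\phi$: this is exactly where the numerical bound $d\ge\binom{n}{k}-k(n-k)$ enters, being precisely the inequality $d\cdot k(n-k)\ge d\,(N+1-d)$ needed for $\dim Y^{d}\ge\dim\mathrm{Gr}(d,W^{*})$ and, equivalently, the inequality that makes $d-1$ points enough to span $W^{*}/\widehat T_{i}$. The real content is to promote this numerology to genuine dominance, which the differential computation above accomplishes once combined with the non-degeneracy of the Plücker embedding of $\Gr_{\mathbb{C}}(n-k,n)$.
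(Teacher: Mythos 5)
Your proof is correct and follows essentially the same route as the paper: pass to projective duality, observe that $\mathcal{D}$ depends only on the span $\Lambda$ of the dual points on $\Gr_{\mathbb{C}}(n-k,n)\subset(\mathbb{CP}^N)^*$, replace the Schubert hyperplanes by general hyperplanes cutting out $\mathbb{P}(\Lambda^\perp)$, and conclude smoothness and $\chi^S_{k,n}(d)=\chi_{k,n}(d)$ by Bertini and the definition of $\chi_{k,n}(d)$. The only difference is that you explicitly prove the key genericity transfer --- that under $d\geq\binom{n}{k}-k(n-k)$ the span of $d$ general points of the dual Grassmannian is a \emph{general} $(d-1)$-plane, via surjectivity of the differential of the span map and non-degeneracy of the tangential projection --- a step the paper's proof only asserts with ``due to the assumption on $d$,'' so your argument is a more detailed justification of the same approach.
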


\begin{proof}
    The projective dual of a hyperplane in $\mathbb{CP}^{N}$ is a point in the dual projective space $(\mathbb{CP}^{N})^{*}$ and $\mathcal{D}$ is dual to an affine space $A \subset (\mathbb{CP}^{N})^{*}$ of dimension $d-1$. Since the hyperplanes are Schubert, their dual points lie on the dual Grassmannian $\Gr_{\mathbb{C}}(n-k,n) \subset (\mathbb{CP}^{N})^{*}$. Due to the assumption on $d$ we can, however, take any other $d$ points spanning $A$, which are dual to some hyperplanes $H_i \subset \mathbb{CP}^{N}$ for $i= 1, \dots, d$, and we can choose them to be in general position. Since the dual points $H_i^{*}$ span $A$, it follows that $H_1 \cap \dots \cap H_d \cap \Gr_{\mathbb{C}}(k,n) = \mathcal{D}$, which is therefore smooth by Bertini's Theorem.
\end{proof}
The numbers $\chi_{k,n}^S(d)$ for $d \geq \binom{n}{k} - k(n-k)$ can be therefore computed as in Section \ref{sec:4:1}, or by the adjunction formula (see \cite{cynk2011euler}) with the knowledge of the Chern class of the Grassmannian as well as its cohomology ring structure.

We now move on to the recursive approach to compute some of the Euler characteristics $\chi^S_{k,n}(d)$.
We start from the initial conditions for the recursion. These are given by Equation \eqref{EC_Gr} and by Lemma \ref{one_divisor}.
\begin{equation}\label{EC_Gr}
    \chi_{k,n}^S(0) = \chi(\Gr_{\mathbb{C}}(k,n)) = \binom{n}{k} \ .
\end{equation}
That $\chi_{k,n}^S(0)$ satisfies the same recursive relation as the binomial coefficient follows by fixing a one-dimensional subspace $L \subset \mathbb{C}^{n}$ and partitioning $\Gr_{\mathbb{C}}(k,n)$ into two sets, characterized by those elements containing $L$ and those that do not. The map sending a subspace to its quotient space with $L$, maps the former subset isomorphically to $\Gr_{\mathbb{C}}(k-1,n-1)$, while the latter becomes a rank $k$ vector bundle over $\Gr_{\mathbb{C}}(k,n-1)$. Then, the recursion relation for $\chi_{k,n}^S(0)$ follows from standard properties of the Euler characteristic.

\begin{lemma}\label{one_divisor}
    The Euler characteristic of one Schubert divisor is given by
    \begin{equation}\label{EC_one_div}
        \chi_{k,n}^S(1) = \chi_{k,n}^S(0)-\chi(\mathbb{C}^{k(n-k)}) = \binom{n}{k} - 1 \ .
    \end{equation}
\end{lemma}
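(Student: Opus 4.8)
The plan is to use additivity of the Euler characteristic for the decomposition of the Grassmannian into a Schubert divisor and its complement, and to identify the complement with an affine space. Fix a point $q\in\Gr_{\mathbb{C}}(n-k,n)$ together with the corresponding $(n-k)$-dimensional subspace $V_q\subset\mathbb{C}^n$, and let $\Sigma_q\subset\Gr_{\mathbb{C}}(k,n)$ denote the associated Schubert divisor, so that $\chi_{k,n}^S(1)=\chi(\Sigma_q)$. By definition, $\Sigma_q$ consists of those $k$-planes that meet $V_q$ in positive dimension, hence its complement $U:=\Gr_{\mathbb{C}}(k,n)\setminus\Sigma_q$ consists precisely of the $k$-planes $V_p$ with $V_p\cap V_q=\{0\}$.

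The first step is to show $U\cong\mathbb{C}^{k(n-k)}$ as a variety. Choose any $k$-dimensional subspace $W\subset\mathbb{C}^n$ with $\mathbb{C}^n=W\oplus V_q$. For $V_p\in U$, the projection $\mathbb{C}^n\to W$ along $V_q$ restricts to an isomorphism $V_p\xrightarrow{\sim} W$ (injective because $V_p\cap V_q=\{0\}$, hence bijective by dimension count), so $V_p$ is the graph of a unique linear map $W\to V_q$; conversely every such graph lies in $U$. This sets up a bijection, in fact an algebraic isomorphism, $U\cong\Hom(W,V_q)\cong\mathbb{C}^{k(n-k)}$ — this is just the standard affine chart of the Grassmannian. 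Consequently $\chi(U)=\chi(\mathbb{C}^{k(n-k)})=1$.

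The second step is additivity. Since $\Sigma_q$ is closed and $U$ is its open complement, the topological Euler characteristic (equal to the compactly supported one for complex algebraic varieties) satisfies $\chi(\Gr_{\mathbb{C}}(k,n))=\chi(\Sigma_q)+\chi(U)$. Combining this with $\chi(U)=1$ and with the identity $\chi_{k,n}^S(0)=\chi(\Gr_{\mathbb{C}}(k,n))=\binom{n}{k}$ from \eqref{EC_Gr} gives
\begin{equation*}
\chi_{k,n}^S(1)=\chi(\Sigma_q)=\chi_{k,n}^S(0)-\chi(\mathbb{C}^{k(n-k)})=\binom{n}{k}-1 \ ,
\end{equation*}
as claimed. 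Note also that this argument is independent of the chosen $q$, consistent with the fact that all Schubert divisors are projectively equivalent.

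There is no real obstacle in this argument: the only point that deserves a word of care is the invocation of additivity of the Euler characteristic over a locally closed decomposition, which is valid for the (Borel–Moore / compactly supported) Euler characteristic and agrees with the ordinary one here because all the spaces involved are complex algebraic varieties. The identification of the divisor complement with affine space is classical, so the statement is essentially immediate once it is recognized as the "big cell" picture for a single Schubert condition.
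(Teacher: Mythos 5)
Your argument is correct and takes essentially the same route as the paper: both proofs rest on the single geometric fact that the complement of a Schubert divisor in $\Gr_{\mathbb{C}}(k,n)$ is the big affine cell $\mathbb{C}^{k(n-k)}$. The paper phrases this via the Schubert CW structure (the divisor is the Grassmannian minus its open top cell, all cells even-dimensional), whereas you make the chart isomorphism explicit and invoke additivity of the compactly supported Euler characteristic, which is the same computation.
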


\begin{proof}
    Any Schubert divisor, being a closed Schubert cell of codimension one, inherits the CW-complex structure of the Grassmannian with the (open) top-cell removed, i.e.\ it has one (even-dimensional) cell fewer than $\Gr_{\mathbb{C}}(k,n)$. 
\end{proof}

In the following we assume that ${0<k<n}$.
\begin{prop}\label{recursion_prop}
    If $n> d k$ for $d \in \mathbb{N}$, then the following recursion relation holds:
    \begin{equation}
        \chi_{k,n}^S(d) = \chi_{k-1,n-1}^S(0) + \chi_{k,n-1}^S(d) = \binom{n-1}{k-1} + \chi_{k,n-1}^S(d)  \ .
    \end{equation}
\end{prop}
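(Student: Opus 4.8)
The plan is to imitate the argument used for $\chi^S_{k,n}(0)$ right above, i.e.\ to fix a one-dimensional subspace $L \subset \mathbb{C}^n$ and stratify the intersection $\mathcal D = D_1 \cap \dots \cap D_d$ of $d$ generic Schubert divisors according to whether the $k$-plane contains $L$ or not, then use additivity of $\chi$. So first I would write $\mathcal D = \mathcal D' \sqcup \mathcal D''$, where $\mathcal D'$ consists of the $k$-planes in $\mathcal D$ that contain $L$ and $\mathcal D''$ of those that do not. On $\mathcal D''$, quotienting by $L$ gives the bundle-projection $\Gr_{\mathbb{C}}(k,n) \setminus \{V \supseteq L\} \to \Gr_{\mathbb{C}}(k,n-1)$ whose fibers are affine spaces $\mathbb{C}^k$; since $\chi(\mathbb{C}^k)=1$, and since this restricts to a similar affine-bundle structure over the image of $\mathcal D''$, we get $\chi(\mathcal D'') = \chi(\text{image of }\mathcal D'')$.

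The key point is then to identify these two pieces as intersections of Schubert divisors in the smaller Grassmannians. For $\mathcal D'$: a $k$-plane $V \supseteq L$ meets an $(n-k)$-plane $V_q$ nontrivially iff $V/L$ meets $(V_q+L)/L$ nontrivially inside $\mathbb{C}^n/L \cong \mathbb{C}^{n-1}$, and for generic $q$ the space $(V_q+L)/L$ has dimension $n-k$ — wait, that would be $(n-k)$-dimensional in an $(n-1)$-space, i.e.\ a Schubert \emph{divisor} condition is $\dim(V/L \cap W) \geq 1$ with $W$ of dimension $(n-1)-(k-1) = n-k$, which is exactly right; so $\mathcal D'$ is the intersection of $d$ generic Schubert divisors in $\Gr_{\mathbb{C}}(k-1,n-1)$. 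Under the hypothesis $n > dk$ one has $n-1 \geq dk > d(k-1)$... but actually more is true: I expect that for generic $q_i$ the condition $L \subseteq V$ already forces the $d$ divisor-conditions to be satisfied automatically, or conversely that $\mathcal D'$ is all of $\{V \supseteq L\} \cong \Gr_{\mathbb{C}}(k-1,n-1)$ — this is where the hypothesis $n > dk$ enters: each $V_{q_i}$ is $(n-k)$-dimensional, and $L$ together with the $V_{q_i}$ span at most $1 + d(n-k)$ dimensions, and when... I need to check that $L$ generically lies on every $D_i$, i.e.\ that $\det\begin{bmatrix} P \\ Q_i\end{bmatrix}=0$ whenever $\mathrm{rowspan}(P) \supseteq L$; this holds iff $L \subseteq V_{q_i}$ fails but the rank drops — actually $L \subseteq V$ forces $\dim(V \cap V_{q_i}) \geq \dim(L \cap V_{q_i})$, which is $\geq 1$ only if $L \subseteq V_{q_i}$. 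So $\mathcal D'$ is \emph{not} automatically everything; rather $\mathcal D' \cong$ the intersection of $d$ Schubert divisors in $\Gr_{\mathbb C}(k-1,n-1)$, and I must argue this intersection is nonempty and in fact that $\chi$ of it equals $\chi^S_{k-1,n-1}(d)$ — but the stated recursion has $\chi^S_{k-1,n-1}(0)$, not $(d)$! So the correct reading must be that, for the generic $q_i$, restricting the divisor $D_i$ to the locus $\{V \supseteq L\}$ yields the \emph{whole} sub-Grassmannian, which happens precisely because... hmm.

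Let me instead take the other branch: I expect $\mathcal D''$ (the non-containing part) is the one carrying the $d$ Schubert conditions, pulled back from $d$ generic Schubert divisors in $\Gr_{\mathbb{C}}(k,n-1)$ (intersecting $V_{q_i}$ with the hyperplane $\mathbb{C}^{n-1}$ complementary to $L$, each still $(n-k) = (n-1)-(k-1)$... no, $(n-1)-k$ dimensional only if we also drop a dimension, giving a divisor condition in $\Gr(k,n-1)$ when $\dim W = n-1-k$, i.e.\ $W = V_{q_i}\cap \mathbb{C}^{n-1}$, generically $(n-k-1)$-dimensional — good). Hence $\chi(\mathcal D'') = \chi^S_{k,n-1}(d)$, using that the affine-$\mathbb{C}^k$-bundle structure is compatible with these conditions and $\chi(\mathbb{C}^k)=1$. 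That accounts for the second term. For the first term, $\mathcal D' \cong \{V\supseteq L\}\cap\mathcal D$: the divisor $D_i$ restricted here is $\{V/L : \dim((V/L)\cap \overline{W_i}) \geq 1\}$ with $\overline{W_i} = (V_{q_i}+L)/L$; generically $\dim\overline{W_i} = n-k$ if $L\not\subseteq V_{q_i}$, which in $\mathbb{C}^{n-1}$ is codimension $k-1$, so $\dim(V/L)\cap\overline{W_i} \geq 1$ is automatic when $(k-1) + (n-k) > n-1$, i.e.\ never, equality always — so it is again a divisor, NOT automatic. The resolution, and the main obstacle, is to see that the $d$ divisor conditions on $\mathcal D' \cong \Gr_{\mathbb C}(k-1,n-1)$ become \emph{dependent/degenerate} and collapse to fewer conditions under $n>dk$ — concretely I would show that a common point of all $D_i$ containing $L$ always exists and the intersection $\mathcal D'$ is irreducible of the expected dimension with $\chi = \chi^S_{k-1,n-1}(0) = \binom{n-1}{k-1}$, the last equality because when $n-1 \geq d(k-1)$ is far from tight the Schubert intersection... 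The cleanest route, which I would pursue, is: under $n > dk$, the generic $V_{q_i}$ can be chosen so that $\bigcap_i D_i \cap \{V\supseteq L\}$ equals \emph{all} of $\{V \supseteq L\}$ — this is plausible because each $D_i$ restricted to $\{V\supseteq L\}$ is cut out by a Plücker-type determinant that is forced to vanish identically when $n-k$ is large relative to $k$ — and then $\chi(\mathcal D') = \chi(\Gr_{\mathbb{C}}(k-1,n-1)) = \chi^S_{k-1,n-1}(0)$. Verifying this identical vanishing, i.e.\ pinning down exactly why $n > dk$ makes the restricted divisor conditions trivial on the $L$-containing stratum, is the step I expect to be the real work; everything else (additivity of $\chi$, the affine-bundle computation, the final binomial identity $\chi^S_{k-1,n-1}(0) = \binom{n-1}{k-1}$ from \eqref{EC_Gr}) is routine.
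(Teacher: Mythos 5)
Your overall strategy --- stratifying $\mathcal D = D_1\cap\dots\cap D_d$ by whether the $k$-plane contains a fixed line $L$, then using additivity of $\chi$ and an affine-bundle computation --- is the same as the paper's, but you are missing the one idea that makes it work, and you flag it yourself as ``the step I expect to be the real work.'' The point is \emph{where} $L$ comes from: the hypothesis $n>dk$ is equivalent to $d(n-k)>(d-1)n$, so for subspaces $V_{q_1},\dots,V_{q_d}$ in general position one has $\dim(V_{q_1}\cap\dots\cap V_{q_d})=n-dk\geq 1$, and one chooses $L$ \emph{inside this common intersection}. With that choice every $k$-plane $V\supseteq L$ satisfies $V\cap V_{q_i}\supseteq L\neq\{0\}$ for all $i$, so the $L$-containing stratum carries no further divisor conditions: it is all of $\{V: L\subseteq V\}\cong \Gr_{\mathbb{C}}(k-1,n-1)$, contributing $\chi^S_{k-1,n-1}(0)=\binom{n-1}{k-1}$. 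This resolves exactly the tension you ran into (``the stated recursion has $\chi^S_{k-1,n-1}(0)$, not $(d)$''); your own observation that $\dim(V\cap V_{q_i})\geq 1$ is automatic only if $L\subseteq V_{q_i}$ was the right clue, but you never drew the conclusion that $L$ can, and must, be taken in $\bigcap_i V_{q_i}$ --- which is precisely what $n>dk$ buys.

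The same choice of $L$ repairs your treatment of the other stratum. Since $L\subseteq V_{q_i}$ for all $i$, the quotient $\pi:\mathbb{C}^n\to\mathbb{C}^n/L\cong\mathbb{C}^{n-1}$ sends $V_{q_i}$ to $V_{q_i}/L$ of dimension $n-k-1=(n-1)-k$, and for $V$ with $V\cap L=\{0\}$ one has $V\cap V_{q_i}\neq\{0\}$ if and only if $\pi(V)\cap (V_{q_i}/L)\neq\{0\}$: if $\bar v$ lies in the latter, lift it to $v\in V$ and $v'\in V_{q_i}$; then $v-v'\in L\subseteq V_{q_i}$, so $v\in V\cap V_{q_i}$. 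Hence $\{V\in\mathcal D: V\cap L=\{0\}\}$ is an affine $\mathbb{C}^k$-bundle (fiber $\Hom(\pi(V),L)$) over the intersection of the $d$ Schubert divisors in $\Gr_{\mathbb{C}}(k,n-1)$ defined by the $V_{q_i}/L$, which are again in general position, giving $\chi^S_{k,n-1}(d)$. In your version, with a generic $L$ not contained in the $V_{q_i}$, this equivalence fails: the condition $V\cap V_{q_i}\neq\{0\}$ does not descend to a condition on $\pi(V)$ alone (it depends on the fiber coordinate), so the stratum is not pulled back from a Schubert intersection in $\Gr_{\mathbb{C}}(k,n-1)$ and the identification $\chi(\mathcal D'')=\chi^S_{k,n-1}(d)$ is unjustified --- this is exactly the failure mode the paper discusses after the proposition, where for $d$ beyond the threshold such an $L$ no longer exists and the set in question need not be a fiber bundle.
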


\begin{proof}
    The inequality $n> d k \iff d(n-k)>(d-1)n$ implies that the intersection of the $d$ subspaces in $ \Gr_{\mathbb{C}}(n-k,n)$ associated to the $d$ generic Schubert divisors contains a one-dimensional subspace $L$. Then, points in the intersection of the divisors can be partitioned into two sets, depending on if they contain $L$ or not. Considering the projection map on the orthogonal complement of $L$, one sees that the former set is isomorphic to $\Gr_{\mathbb{C}}(k-1,n-1)$, while the latter to a rank $k$ $\mathbb{C}$-vector bundle over the intersection of $d$ generic Schubert divisors in $\Gr_{\mathbb{C}}(k,n-1)$. 
\end{proof}

\begin{exa} \label{ex:252}
    Let $k=2$, $n=5$ and consider $d=2$. By Proposition \ref{recursion_prop} we have 
    \begin{equation}
        \chi_{2,5}^S(2) = \chi_{1,4}^S(0) + \chi_{2,4}^S(2) = \binom{4}{1} + 4 = 8 \ ,
    \end{equation}
    where we relied on the numerical value $\chi_{2,4}^S(2)=4$ from (\ref{chis_24}), which is also computed in Example \ref{ex:chi24}. This agrees with \eqref{chis_2_5} and with the following result.
\end{exa}

\begin{prop}\label{2_div_recurs}
    For the intersection of two generic Schubert divisors we have
    \begin{equation}
        \chi_{k,n}^S(2) = \chi_{k-1,n-1}^S(1) + \chi_{k,n-1}^S(1) = \binom{n}{k} - 2 \ .
    \end{equation}
\end{prop}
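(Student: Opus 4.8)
The plan is to compute $\chi_{k,n}^S(2)=\chi(D_1\cap D_2)$, where $D_1,D_2\subset\Gr_{\mathbb{C}}(k,n)$ are the Schubert divisors of two generic $(n-k)$-planes $V_1,V_2\subset\mathbb{C}^n$, by iterating additivity of the Euler characteristic (as in the proof of Theorem~\ref{thm:main}) and then invoking Lemma~\ref{one_divisor}. Let $U_2:=\Gr_{\mathbb{C}}(k,n)\setminus D_2=\{W:W\cap V_2=0\}$ be the affine cell of $k$-planes complementary to $V_2$, so $U_2\cong\mathbb{C}^{k(n-k)}$ and $\chi(U_2)=1$. Since $D_1\setminus D_2=D_1\cap U_2$ and $U_2\setminus D_1=\{W:W\cap V_1=0=W\cap V_2\}$, two applications of additivity give
\[
\chi(D_1\cap D_2)=\chi(D_1)-\chi(D_1\cap U_2)=\chi(D_1)-\chi(U_2)+\chi(U_2\setminus D_1),
\]
and with $\chi(D_1)=\binom{n}{k}-1$ from Lemma~\ref{one_divisor} this becomes
\[
\chi_{k,n}^S(2)=\binom{n}{k}-2+\chi\bigl(\{W\in\Gr_{\mathbb{C}}(k,n):W\cap V_1=0=W\cap V_2\}\bigr).
\]
So the whole statement reduces to showing that the variety of $k$-planes that are \emph{common complements} of $V_1$ and $V_2$ has Euler characteristic $0$; the recursive form $\chi_{k-1,n-1}^S(1)+\chi_{k,n-1}^S(1)$ then follows immediately from Lemma~\ref{one_divisor} and Pascal's identity $\binom{n}{k}=\binom{n-1}{k-1}+\binom{n-1}{k}$.

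The crux is therefore the vanishing $\chi(\{W:W\cap V_1=0=W\cap V_2\})=0$, which I would establish by exhibiting a fixed-point-free $\mathbb{C}^*$-action. Choosing a basis $e_1,\dots,e_n$ of $\mathbb{C}^n$ adapted to the pair $(V_1,V_2)$ (so that $V_i=\langle e_j:j\in J_i\rangle$ for index sets $J_1,J_2$ of size $n-k$), every one-parameter subgroup of the diagonal torus $(\mathbb{C}^*)^n$ preserves $V_1$ and $V_2$, hence acts on $\Gr_{\mathbb{C}}(k,n)$ preserving $D_1$, $D_2$, and the open set of common complements; choosing it generic, its fixed locus in $\Gr_{\mathbb{C}}(k,n)$ consists of the coordinate $k$-planes $E_I=\langle e_i:i\in I\rangle$. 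Now $E_I\cap V_j=\langle e_i:i\in I\cap J_j\rangle$ is $0$ only when $I=[n]\setminus J_j$, so a coordinate $k$-plane is a common complement only when $J_1=J_2$, i.e.\ $V_1=V_2$, which genericity excludes. Hence the action on the variety of common complements is fixed-point-free, and by the localization property $\chi(X)=\chi(X^{\mathbb{C}^*})$ of the Euler characteristic for $\mathbb{C}^*$-actions on complex algebraic varieties this Euler characteristic is $0$. Substituting back yields $\chi_{k,n}^S(2)=\binom{n}{k}-2$.

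The step I expect to be most delicate is this vanishing, and in particular making the argument uniform in $k$ and $n$ (Proposition~\ref{recursion_prop} only applies when $n>2k$). If one prefers to avoid the $\mathbb{C}^*$-localization fact, the same vanishing can be obtained by hand: fixing one common complement $W_0$, every complement $W$ of $V_2$ is the graph of a unique $\phi\in\Hom(W_0,V_2)$, and such a $W$ is also a complement of $V_1$ precisely when $\mathrm{id}_{W_0}+p_1\circ\phi$ is invertible, where $p_1\colon\mathbb{C}^n\to W_0$ is the projection with kernel $V_1$; using the identity $\det(\mathrm{id}_{W_0}+p_1\phi)=\det(\mathrm{id}_{V_2}+\phi\circ p_1)$ to pass to the smaller of the two matrix sizes exhibits the variety of common complements as a Zariski-locally-trivial affine bundle over $GL_m(\mathbb{C})$ with $m=\min(k,n-k)$, and $\chi(GL_m(\mathbb{C}))=\chi(\mathbb{C}^*)\,\chi(SL_m(\mathbb{C}))=0$ because $\det\colon GL_m(\mathbb{C})\to\mathbb{C}^*$ is an $SL_m(\mathbb{C})$-bundle. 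In either case, once the vanishing is in place the remaining computation is routine bookkeeping with additivity and Pascal's identity.
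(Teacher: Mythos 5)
Your argument is correct, but it takes a genuinely different route from the paper. The paper's proof first uses the duality $\Gr_{\mathbb{C}}(k,n)\cong\Gr_{\mathbb{C}}(n-k,n)$ to assume $n\le 2k$, so that the two $(n-k)$-planes $V_1,V_2$ meet only at the origin; it then fixes a line $L$ contained in exactly one of them and splits $D_1\cap D_2$ according to whether a $k$-plane contains $L$, identifying one piece with a Schubert divisor in $\Gr_{\mathbb{C}}(k-1,n-1)$ and the other with an affine bundle of rank $k-1$ over a Schubert divisor in $\Gr_{\mathbb{C}}(k,n-1)$ --- thereby realizing the recursion $\chi^S_{k,n}(2)=\chi^S_{k-1,n-1}(1)+\chi^S_{k,n-1}(1)$ geometrically, in the same spirit as Proposition \ref{recursion_prop}. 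You instead peel off the divisors by additivity and reduce the whole statement to the vanishing of the Euler characteristic of the locus of common complements of $V_1$ and $V_2$, which you establish either by a fixed-point-free $\mathbb{C}^*$-action together with localization, or by exhibiting that locus as an affine bundle over $GL_m(\mathbb{C})$ via the Sylvester determinant identity; both of these arguments check out (any two subspaces can indeed be simultaneously coordinatized, and the torus fixed points in $\Gr_{\mathbb{C}}(k,n)$ are the coordinate $k$-planes, none of which is a common complement unless $V_1=V_2$). What each approach buys: yours is uniform in $k$ and $n$ (no duality reduction or case distinction on $n$ versus $2k$) and in fact needs only $V_1\neq V_2$ rather than general position, whereas the paper's decomposition produces the middle recursion formula as an actual geometric statement, while in your proof that expression is recovered only numerically from Lemma \ref{one_divisor} and Pascal's identity.
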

\begin{proof}
    Note that $\Gr_{\mathbb{K}}(k,n) \cong \Gr_{\mathbb{K}}(n-k,n)$, meaning that $\chi^S_{k,n}(d) = \chi^S_{n-k,n}(d)$. We can therefore assume w.l.o.g. that $n \leq 2k$.
    This condition tells us that the subspaces associated to the divisors do not intersect and therefore we can choose a one-dimensional space contained only in one of them. We can partition the divisors' intersection into two sets as in Proposition \ref{recursion_prop}. One is isomorphic to one Schubert divisor in $\Gr_{\mathbb{C}}(k-1,n-1)$, while the other to a fiber bundle over one divisor in $\Gr_{\mathbb{C}}(k,n-1)$ with fiber isomorphic to an affine space over $\mathbb{C}$ of dimension $k-1$. 
\end{proof}

\begin{exa} \label{ex:chi24}
    Let $k=2$, $n=4$ and consider $d=2$. Combining Proposition \ref{2_div_recurs} with (\ref{EC_Gr}) we obtain
    \begin{equation}
        \chi_{2,4}^S(2) = \chi_{2,4}^S(0) - 2 = 6-2=4,
    \end{equation}
    which agrees with the numerical value from (\ref{chis_24}).
\end{exa}

Sometimes similar techniques allow to compute the Euler characteristic directly, by analyzing the structure of the variety. This is shown in the following example.

\begin{exa} \label{ex:adhoc}
    We want to compute the Euler characteristic $\chi_{2,5}^S(3) = 7$ from first principles, therefore fully theoretically reproducing the results of Example \ref{ex:gr25}. We start by noticing that the singular locus of the intersection of three generic Schubert divisors in $\Gr_{\mathbb{C}}(2,5)$ consists of three distinct points. We can then partition the smooth locus into two subsets determined by whether an element  contains or not the one-dimensional space given by the intersection of the subspaces associated to two Schubert divisors. The first set is isomorphic to $\mathbb{CP}^{2}$ minus two points.
    The second set forms a one-dimensional affine fiber bundle with base space isomorphic to the intersection of two generic Schubert divisors in $\Gr_{\mathbb{C}}(2,4)$ minus one point. Therefore, the Euler characteristic of the smooth locus is equal to four, and hence $\chi_{2,5}^S(3) = 3 + 4 = 7$, agreeing with the numerical result in (\ref{chis_2_5}).
\end{exa}

Even though the same approach of the proof of Proposition \ref{recursion_prop} may work in other cases, as we have just seen, we now comment on its validity in general. For given positive integers $0<k<n$ there exists a unique $d_{*} \geq 1$ such that $n>d_{*}k$ and $n \leq (d_{*}+1)k$. Then, Proposition \ref{recursion_prop} holds for every $d \leq d_{*}$. For $d > d_{*}$, one can still find a one-dimensional space $L$ contained in the intersection of the first $d_{*}$ Schubert divisors. By genericity, $L$ is then not contained in the other divisors. We can then write the intersection of $d$ Schubert divisors as the disjoint union of two sets: one is the same as the intersection of $d-d^*$ generic Schubert divisors in $\Gr_{\mathbb{C}}(k-1,n-1)$, and has Euler characteristic $\chi_{k-1,n-1}^{S}(d-d^{*})$, while the second one is given by
\begin{equation}
\begin{aligned}\label{decomposition}
        \{V\in \Gr_{\mathbb{C}}(k,n) : V \cap L = \{0\} \ , \ V \cap D_{i} \neq \{0 \} \quad \text{for} \ 1 \leq i \leq d \} \ ,
\end{aligned}
\end{equation}
where $D_i \in \Gr_{\mathbb{C}}(n-k,n)$ are the subspaces associated to the divisors. The difficulty in generalizing the recursion relations is that (\ref{decomposition}) may not be a fiber bundle in general.

\subsection{Special arrangements in $\Gr_{\mathbb{C}}(2,4)$}

Motivated by the CEGM formalism and the $k=m=2$ amplituhedron, in this section we consider two non-generic Schubert arrangements in $\Gr_{\mathbb{C}}(2,4)$. One of them consists of $d$ Schubert divisors corresponding to lines in $\mathbb{CP}^3$ forming a cycle (meaning that the lines can be ordered such that two adjacent lines intersect). An instance of such a configuration is given by the four positroid hyperplanes $p_{12} = p_{14} = p_{23} = p_{34} = 0$. The other arrangement we consider is given by the hyperplanes $p_{12} = p_{13} = p_{14} = p_{23} = p_{24} = p_{34} = 0$ in~$\mathbb{CP}^5$. 

\begin{exa}
    Let $\mathcal{H}_d$ be the arrangement of $d$ Schubert divisors in $\mathbb{CP}^5$ corresponding to $d$ lines in $\mathbb{CP}^3$ forming a cycle, and $X_d = \Gr_{\mathbb{C}}(2,4) \setminus \mathcal{H}_d$. The intersection poset in this case is still a truncated boolean algebra, although the rank function is now not given by the codimension of the intersections in $\mathbb{CP}^5$. We recall $\chi(\Gr_{\mathbb{C}}(2,4)) = 6$. The Euler characteristic of a single Schubert divisor is $5$, and that of the intersection of two divisors is still always $2$. When three divisors are intersected, there are two possibilities. If the divisors correspond to three disjoint lines in $\mathbb{CP}^3$, then the Euler characteristic is equal to $2$, and in all other cases it is $3$. There are $d(d-4)+d = d(d-3)$ many ways to intersect divisors coming from disjoint lines, and therefore $\binom{d}{3} - d(d-3)$ triple intersections of other types. For the intersection of four divisors the Euler characteristic is always $2$. Now, applying Theorem \ref{thm:main}, we obtain 
    \begin{equation}
\begin{aligned}
    \chi(X_d) &= 6 - 5d + 4 \binom{d}{2} -  2 \frac{d(d-4)(d-5)}{6} - 3 d(d-3) + 2 \binom{d}{4} \\
    &= \frac{1}{12}(72 - 62d + 35d^2 - 10d^3 + d^4) \qquad \forall \, d \geq 4 \ .
\end{aligned}
\end{equation}
For $d = 3$ one can directly compute $\chi(X_3) = 0$.
\begin{table}[h!] 
\centering
\begin{tabular}{|c|c|c|c|c|c|c|c|c|c|} 
  \hline
  $d$ &4 & 5 & 6 & 7 & 8 & 9 & 10 & 11 \\
  \hline 
  $\chi(X_d)$ &0 &1 &8 &27 & 66 & 135 & 246 & 413\\
  \hline 
  
\end{tabular}
\caption{The ML-degree of $X_d$, the complement in $\Gr_{\mathbb{C}}(2,4)$ of a Schubert arrangement corresponding to $d$ lines in $\mathbb{CP}^3$ forming a cycle. These values can be compared to those in Tables \ref{table:ML_24} and \ref{table:ML_24schubert}. We also checked the data with our \texttt{Julia} code.}
\label{table:ML_Xn}
\end{table}
\end{exa}

\begin{exa}
    Consider the arrangement of the six Pl\"ucker hyperplanes in $\Gr_{\mathbb{C}}(2,4)$. Unlike in the generic case, the intersection of any $5$ divisors is now non-empty, it consists of a unique point. The Euler characteristics for the intersections of $2, 3, 4$ and $5$ divisors in this arrangement are $4, 3, 2$ and $1$ respectively. We still have $6$ and $5$ for the Euler characteristics of $\Gr_{\mathbb{C}}(2,4)$ and a single Schubert divisor. Theorem \ref{thm:main} now yields
    \begin{equation}
    6-5 \cdot 6 + 4 \binom{6}{2} - 3 \binom{6}{3} + 2 \binom{6}{4} - 1 \binom{6}{5} = 0 \ ,
    \end{equation}
    which we also verified with our code in \texttt{Julia}.
\end{exa}

\begin{exa}
    Consider as above the arrangement of the six Pl\"ucker hyperplanes in $\Gr_{\mathbb{C}}(2,4)$ and one extra Schubert divisor, whose associated line in $\mathbb{P}^3$ is disjoint from every other line coming from the Pl\"ucker hyperplanes. The analysis of the poset's structure is similar to above, with the main difference that there are three triple intersections associated to lines not intersecting each other. The Euler characteristic of the latter is equal to $2$, while all the other characteristics are as above. Moreover, out of the $\binom{7}{5}=21$ intersections of five divisors, only $18$ are nonempty. Also, all the intersections of six divisors are empty. Therefore, by Theorem \ref{thm:main} the Euler characteristic of the complement is
    \begin{equation}
    6-5 \cdot 7 + 4 \binom{7}{2} - 3 \left(\binom{7}{3}-3 \right) - 2 \cdot 3 + 2 \binom{7}{4} - 1 \cdot 18 = 4 \ ,
    \end{equation}
    which agrees with the value computed in \cite[Theorem 4.1]{devriendt2024two}.
\end{exa}

\section{The real case} \label{sec:5}

When considering a hyperplane arrangement $\mathcal{H}$ in $\mathbb{R}^n$, the Euler characteristic $\chi(\mathbb{R}^n\setminus \mathcal{H})$ has a clear combinatorial meaning. 
It counts the total number of regions (connected components) of $\mathbb{R}^n\setminus \mathcal{H}$ since each region is contractible and has Euler characteristic one. 
Moreover, the regions of $\mathbb{R}^n\setminus \mathcal{H}$ are in bijection with the maximal covectors of the underlying oriented matroid $\mathcal{M}_\mathcal{H}$~\cite[Chapter 1]{orientedmatroids}. 
The latter are also known as sign patterns, i.e.\ sign vectors realized by hyperplane equations evaluated at a point in some region.

Such combinatorial interpretation is unfortunately not immediately available for $\Gr_{\mathbb{R}}(k,n) \setminus \mathcal{H}$. 
One may ask the following natural questions.
\begin{enumerate}
\item Are all regions contractible?
\item How many regions and sign patterns are there?
\item Is the number of regions equal to the number of sign patterns?
\end{enumerate}
We will answer these questions via examples. 
We note right away that the answers to 1 and 3 are negative and this demonstrates a difference from hyperplane arrangements in $\RR^n$.
Our approach here is based on Morse Theory. This is in contrast to the techniques for computing the Euler characteristic presented in Section \ref{sec:3}, which only work over $\mathbb{C}$.

In \cite{cummings2024smooth}, the authors introduced an algorithm based on Morse Theory and solving ODEs to compute the Euler characteristic and the number of regions for $\RR^n\backslash V(f(x_1,\ldots,x_n))$ where $f(x_1,\ldots,x_n) \in \RR[x_1,\ldots,x_n]$. 
By taking $f(x_1,\ldots,x_n)$ to be a product of polynomials, the algorithm can be applied to hypersurface arrangements.
One can also obtain the Euler characteristic and sign pattern for each region.
We will present a slightly different version of \cite[Algorithm 1]{cummings2024smooth} in Algorithm \ref{alg:Hauenstein}, and show how to apply it to hyperplane arrangements in $\Gr_\RR(k,n)$. This will answer the three questions raised above.

\begin{algorithm}[htb]
\caption{Regions of $\RR^n\backslash \bigcup_{i=1}^d V(f_i)$}\label{alg:Hauenstein}
\begin{algorithmic}[1]
\renewcommand{\algorithmicrequire}{\textbf{Input:}}
\Require Equations of $d$ hypersurfaces $f_1(x_1,\ldots,x_n),\ldots,f_d(x_1,\ldots,x_n)$ in $\RR^n$

\State Choose a generic degree two polynomial $g(x_1,\ldots,x_n)$ with $g(x)>0$ for any $x\in \RR^n$ and define a rational function $r:=\frac{f_1\cdots f_d}{g^\ell}$ where $2\ell>\sum_{i=1}^d \deg f_i$.

\State Compute the critical points $\{p_1,\ldots,p_M\}$ of $-\log |r|$, check the Hessian matrices $H_{p_i}$ at $p_i$ are non-degenerate and compute their eigendecompositions. 

\State Record the index (number of negative eigenvalues of $H_{p_i}$) and unstable eigenvectors (those with negative eigenvalues) of each critical point $p_i$.

\State The Euler characteristic $\chi(\RR^n\backslash\bigcup_{i=1}^d V(f_i))$ is given by $\sum_{\ell = 0}^n (-1)^\ell \mu_\ell$, where $\mu_\ell$ is the number of index $\ell$ critical points \cite{mukherjee2015morse}.

\State Initialize a graph $G$ with vertices $\{p_1,\ldots,p_M\}$ and no edges.

\State For each index one critical point $p_{i}$ with unstable eigenvector $v$, solve the gradient ascend ODE with starting points $p_{i}+\epsilon v, p_{i}-\epsilon v$ for small $\epsilon>0$. 
Add the edges between the two critical points the ODE solutions limit to and $p_{i}$ to $G$.

\State For each index $>1$ critical point $p_{i}$ with one unstable eigenvector $v$, solve gradient ascend ODE with starting point $p_{i}+\epsilon v$ for small $\epsilon>0$. Add the edge between the critical point the ODE solution limits to and $p_{i}$ to $G$.

\State Compute the connected components of $G$. For each region, compute the sign pattern by evaluating the polynomials $(f_1,\ldots,f_d)$ at some critical point and Euler characteristic using indices of the critical points in the region.
\renewcommand{\algorithmicensure}{\textbf{Output:}}
\Ensure All realizable sign patters in $\{-,+\}^d$, and all  regions,  with their Euler characteristics.
\end{algorithmic}
\end{algorithm}

The idea of Algorithm \ref{alg:Hauenstein} is as follows. 
We define a rational Morse function $r$ that vanishes on the hypersurfaces and at infinity. 
This ensures that each region of $\RR^n\setminus\bigcup_{i=1}^d V(f_i)$ has at least one local minimum (index zero critical point) of $-\log|r|$. 
Then to figure out whether two critical points live in the same region, one starts from a critical point that is not a local maximum and does gradient ascend. 
The path will not leave the region and limit to some other critical point. 
Any critical point will eventually travel to some local maximum and any two local maxima in the same region are connected by some index one critical point via the Mountain Pass Theorem, see e.g. \cite[Theorem 3]{more2004computing}.
Hence, these path trackings give full information to partition the critical points into subsets of points belonging to the same region.
From this information one also obtains the sign pattern and the Euler characteristic of each region.

We now explain how to apply Algorithm \ref{alg:Hauenstein} to the real hyperplane arrangement $\mathcal{H}=\{H_1,\ldots,H_d\}$ in $\Gr_\RR(k,n)$, with $H_1$ being a Schubert hyperplane.
The importance of having one Schubert hyperplane is that the complement of it in $\Gr_\RR(k,n)$ has a parametrization delivering a simple isomorphism with $\RR^{k(n-k)}$.

By the discussion in Section \ref{sec:2}, one obtains the equation of a Schubert hyperplane in Pl\"ucker coordinates via the Laplace expansion in \eqref{eq:laplace_schubert_hyperplane}.
By a possible change of basis, one can assume in \eqref{eq:laplace_schubert_hyperplane} that
$$Q= \begin{bmatrix}
    0_{(n-k)\times k}, I_{n-k}
\end{bmatrix}.$$ 
The corresponding Schubert hyperplane is $p_{1,\ldots,k}=0$ and there is an isomorphism 
$$\phi: \RR^{k(n-k)} \xrightarrow{\sim} \Gr_\RR(k,n) \backslash \{p_{1,\ldots,k}=0\}, $$
given by mapping the maximal minors of 
$$
P'=\begin{bmatrix}
& & &x_{1,1}&\cdots&x_{1,n-k}\\
&I_k& & \vdots& \ddots&\vdots\\
& & &x_{k,1}&\cdots&x_{k,n-k}
\end{bmatrix}
$$
to the Pl\"ucker coordinates.
We denote the equations of $H_2,\ldots,H_d$ in variables $\{x_{1,1},x_{1,2},\ldots,x_{k,n-k}\}$ by $f_2,\ldots,f_d$ respectively. 
They have degrees at most $k$. 
The regions in $\RR^{k(n-k)}\backslash\bigcup_{i=2}^d V(f_i)$ and their Euler characteristics are the same as those for $\Gr_\RR(k,n)\backslash \mathcal{H}$ by the isomorphism $\phi$. 
Therefore, we apply Algorithm \ref{alg:Hauenstein} to $\RR^{k(n-k)}\backslash\bigcup_{i=2}^d V(f_i)$.

We conclude this section with examples that answer the questions raised at the start of this section.
We will focus on the Grassmannian $\Gr_\RR(2,4)$. We fix one Schubert hyperplane to be $\{p_{12}=0 \}$ and we can then work in $\RR^{4}\cong \Gr_\RR(2,4)\backslash \{p_{12}=0\}$ parametrized by $x_{11},x_{12},x_{21},x_{22}$.
The code is available at \cite{mathrepo} and is based on the \texttt{Julia} package \texttt{HypersurfaceRegions} \cite{breiding2024computing}.

Regions of a hyperplane arrangement in the Grassmannian are not always contractible. 
In the following example, all regions are non-contractible.
\begin{exa}
We consider the Schubert hyperplanes associated to the four $2\times 4$ matrices
$$
\begin{bmatrix}
    0&0&0&1\\
    0&0&1&1
\end{bmatrix},\begin{bmatrix}
0&1&0&1\\ 1&1&0&1
\end{bmatrix}, \begin{bmatrix}
1&0&1&1\\1&1&1&1
\end{bmatrix},
\begin{bmatrix}
0&1&1&1\\
1&0&0&1
\end{bmatrix}.
$$

They correspond to lines in $\mathbb{RP}^3$ that intersect the four lines connecting pairs of vertices in a $(2,2,2,2)$ partition of 8 vertices in a unit cube.
The Schubert hyperplane for the first matrix is $p_{12}=0$. 

The equation for the Schubert hyperplane corresponding to the second matrix is 
$$f_2=\det \begin{bmatrix}
0&1&0&1\\
1&1&0&1\\
1&0&x_{11}&x_{12}\\
0&1&x_{21}&x_{22}
\end{bmatrix}=x_{11}-(x_{11}x_{22}-x_{12}x_{21}).$$ 
Similarly, $f_3=x_{21}-x_{22}+(x_{11}x_{22}-x_{12}x_{21}),\ f_4=1+x_{11}-x_{12}-x_{21}-(x_{11}x_{22}-x_{12}x_{21})$. 

We apply Algorithm \ref{alg:Hauenstein} to $\RR^4\backslash\bigcup_{i=2}^4 V(f_i)$. There are eight regions in total and each corresponds to a distinct sign pattern.
There are seven regions with Euler characteristic 0 and each of them has the homology type of a circle. The region with
$f_2>0,\ f_3<0,\ f_4>0$ has Euler characteristic $-2$.
No region has Euler characteristic 1, so all the regions are non-contractible.
\end{exa}

It is also possible to have more than one region per sign pattern.
\begin{exa}
Consider the four Pl\"ucker hyperplanes given by the vanishing of $p_{12}, p_{14},\  p_{23}, \ p_{34}$ in $\Gr_\RR(2,4)$. 
In $\RR^{4}\cong \Gr_\RR(2,4)\setminus\{p_{12}=0\}$, $p_{14}=0,\ p_{23}=0,\ p_{34}=0$ become $x_{22}=0,\ -x_{11}=0,\ x_{11}x_{22}-x_{12}x_{21}=0$.
The output of Algorithm \ref{alg:Hauenstein} for $\RR^4\setminus V(x_{22}x_{11}(x_{11}x_{22}-x_{12}x_{21}))$ is as follows. 
There are 12 regions in total. All regions are contractible and all possible sign patterns appeared. 
Each of the sign patterns $(- - +),(+ - -),(- + -),(+ + +)$ has two regions. 
The other four sign patterns have one region each.

We consider another example with four Schubert hyperplanes.
We take one Schubert hyperplane $\{p_{12}=0\}$ and the other three are given by the matrices
$$
\begin{bmatrix}
4 & 6 & -10 & 2 \\
6 & 4 & -5 & -14
\end{bmatrix}, 
\begin{bmatrix}
1 & 4 & 7 & 14\\
26 &  0 & -11 & 1
\end{bmatrix},
\begin{bmatrix}
14 &  7 & -4 & -8\\ -6 & 7  & 2 &   7
\end{bmatrix}.
$$
We apply Algorithm \ref{alg:Hauenstein} to $\RR^4\backslash\bigcup_{i=2}^4V(f_i)$. 
There are 9 regions in total. 
All sign patterns appeared. 
The sign pattern $(- - -)$ contains two contractible regions. 
For the 7 remaining regions,
the region with sign pattern $(+, +, -)$ has Euler characteristic $-2$, the region with sign pattern $(+, +, +)$ has Euler characteristic $1$ and the other 5 regions have Euler characteristic 0.
\end{exa}

\begin{exa}
We now report the number of regions for sampling $n=4,5,6$ random Schubert hyperplanes in 100 trials.
We always fix one Schubert hyperplane to be $p_{12}=0$ and we sample the rest of them by sampling $2\times 4$ matrices with standard Gaussian entries. 

  

\begin{table}[htbp] 
\centering
\begin{tabular}{|c|c|c|c|c|c|c|c|c|c|} 
  \hline
  $\# \ \text{Schubert hyperplanes}$ & \# \text{Regions } & \text{Probability} (\%)\\
  \hline 
  $4$ &(8,9) & (82,18) \\
  \hline 
  $5$ & (16,17,18,19) & (31,28,30,11)   \\
  \hline
  $6$ & (32,33,34,35,36,& (5,11,9,11,18,\\
   &  37,38,39,40,41) & 14,11,15,3,3)\\
  \hline
\end{tabular}
\caption{Number of regions for $4,5,6$ Schubert hyperplanes in $\Gr_{\mathbb{R}}(2,4)$.}
\label{table:connected components}
\end{table}


For comparison, we also report the number of regions for sampling $n=3,4,5$ general hyperplanes and fix an additional Schubert hyperplane $p_{12}=0$ in 100 trials.
We sample the coefficients of the general hyperplanes with standard Gaussians.

\begin{table}[htbp] 
\centering
\begin{tabular}{|c|c|c|c|c|c|c|c|c|c|} 
  \hline
  $\# \ \text{General hyperplanes}$ & \# \text{Regions } & \text{Probability} (\%)\\
  \hline 
  $3$ &(8,9) & (92,8) \\
  \hline 
  $4$ & (15,16,17,18,19,20,21) & (1,42,23,21,6,4,2)   \\
  \hline
  $5$ & (28,30,32,33,34,35,36& (1,1,1,13,14,15,18,15\\
   & 37,38,39,41,42) & 9,2,2,1,1)\\
  \hline
\end{tabular}
\caption{Number of regions for $4,5,6$ hyperplanes in $\Gr_{\mathbb{R}}(2,4)$ with one hyperplane $p_{12}=0$ and the rest general.}
\label{table:connected components general}
\end{table}

\end{exa}

\section*{Acknowledgments}
The authors are grateful to Bernd Sturmfels for initiating the project and to Kristian Ranestad for useful discussions. EM also thanks Prashanth Raman, Jonah Stalknecht and Nima Arkani-Hamed for helpful discussions. EM is funded by the European Union (ERC, UNIVERSE PLUS, 101118787). Views and opinions expressed are however those of the authors only and do not necessarily reflect those of the European Union or the European Research Council Executive Agency. Neither the European Union nor the granting authority can
be held responsible for them.

\bibliography{main}

\end{document}